\newcommand{\cC}{\mathcal{C}}
\newcommand{\cG}{\mathcal{G}}
\newcommand{\End}{\mathrm{End}}
\newcommand{\fg}{\mathfrak{g}}
\newcommand{\fh}{\mathfrak{h}}
\newcommand{\fk}{\mathfrak{k}}
\newcommand{\ff}{\mathfrak{f}}
\newcommand{\ndZ}{\mathbb{Z}}
\newcommand{\ot}{\otimes}
\newcommand{\id}{\operatorname{id}}
\newcommand{\Kd}{\delta^{\mathrm{K}}}
\newcommand{\lma}[2]{\leftidx{\vphantom{+}_{#1}}{\alpha}{\vphantom{+}_{#2}}}
\newcommand{\lmC}[1]{\prescript{}{#1}\cC}
\newcommand{\lcmC}[1]{\prescript{#1}{}\cC}
\newcommand{\lcrmC}[1]{\prescript{#1}{#1}\cC}
\newcommand{\crmh}{\widehat{\eta}} 
\newcommand{\Lone}[1][\cG]{L(1,#1)}
\newcommand{\Loone}[1][\cG]{L_0(1,#1)}
\numberwithin{equation}{section}
\numberwithin{figure}{section}
\numberwithin{table}{section}
\newtheorem{thm}{Theorem}[section]
\newtheorem*{thm*}{Theorem}
\newtheorem{lem}[thm]{Lemma}
\newtheorem{pro}[thm]{Proposition}
\theoremstyle{definition} 
\newtheorem{defn}[thm]{Definition}
\newtheorem{rem}[thm]{Remark}
\title[Bosonization of curved Lie bialgebras]{Bosonization of curved Lie bialgebras}
\author{Istvan Heckenberger}
\address[I. Heckenberger]{Philipps-Universit\"at Marburg,
        FB Mathematik und Informatik,
        Hans-Meer\-wein-Stra\ss e,
35032 Marburg, Germany.}
\email{heckenberger@mathematik.uni-marburg.de}
\author{Leandro Vendramin}
\address[L. Vendramin]{Department of Mathematics and Data
Science, Vrije Universiteit Brussel, Pleinlaan 2, 1050 Brussel}
\email{Leandro.Vendramin@vub.be}
\date{\today}
\begin{document}

\keywords{Lie bialgebra, bosonization, Nichols algebra, (super) Jordan plane}
\subjclass[2020]{17B62,17B75,18M05}

\maketitle

\begin{abstract}
    We use Cartier's preadditive symmetric monoidal categories to 
    study Lie bialgebras. We prove that bosonization can be done consistently in this framework. In the last part of the paper we present explicit examples and indicate a deep relationship between certain curved Lie bialgebras and Nichols algebras over abelian groups.
\end{abstract}

\section{Introduction}

Since their first appearance, Lie bialgebras and their generalizations fascinate a large community of mathematicians and physicists. Motivated by Sklyanin's classical $r$-matrix, Drinfeld  \cite{MR688240,MR1047964} and Semenov-Tian-Shansky  \cite{MR725413} used Lie bialgebras to study Poisson Lie groups.
Publications, problems and questions of Drinfeld had a great impact on further development of the field. Between 1996 and 2008, Etingof and Kazhdan proved in a series of papers that Lie bialgebras can be quantized, with particular emphasis on Kac-Moody and vertex algebras \cite{MR1403351,MR1669953,MR1771217,MR1771218,MR2452604}. One of the main ideas of their approach was to quantize the whole category of modules rather than the universal enveloping algebra of the Lie algebra only, and then to conclude the existence of a quantized (quasi-)Hopf algebra using Tannakian 
reconstruction.

The deformation theory of Lie bialgebras remained until today a vital area of research. The class of examples has been extended successively among others by Andruskiewitsch, Enriquez, Geer, Halbout, Hurle, Majid, and Makhlouf to include quasi-Lie bialgebras, $\Gamma$-Lie bialgebras, Lie superbialgebras, color Lie bialgebras, and braided Lie bialgebras. For more details we refer to the papers \cite{MR1227871,MR2173841,MR2407849,MR2630065, MR2629982, MR2264064,MR4008967,MR1744574}. A classification of Lie bialgebras over current algebras was achieved in \cite{MR2734334}.

Parallelly, several attempts have been made to study (among others) Lie algebras in very general categorical contexts,
e.g.~\cite{MR3761992,MR3070680,MR3839605,MR3154807}. A good source towards this is the work \cite{MR3322335} of Buchberger and Fuchs, where basic concepts on Lie algebras in (pre)additive symmetric monoidal categories have been worked out pleasingly in detail. Although there seems to be no clear agreement about the setting in the largest possible generality, there is a clear desire to search for this.
E.g.~Goyvaerts and Vercruysse write in \cite{MR3070680}:
\textit{Motivated by the way that the field of Hopf algebras benefited from the
interaction with the field of monoidal categories (see e.g. \cite{MR1800719}) on one hand, and
the strong relationship between Hopf algebras and Lie algebras on the other hand,
the natural question arose whether it is possible to study Lie algebras within the
framework of monoidal categories, and whether Lie theory could also benefit from
this viewpoint.}
In the same vein, Buchberger and Fuchs write:
\textit{\dots we insist on imposing relevant conditions directly on the underlying
category. \dots a benefit of the abstraction inherent in the categorical
point of view is that it allows one to neatly separate features that apply only to a subclass of
examples from those which are essential for the concepts and results in question and are thereby
generic.}

Our work is built partially on the point of view illustrated in the previous paragraph. As already observed by Majid in \cite[Def.\,2.2,\,Th.\,3.7]{MR1744574}, the concept of a semidirect sum (also called bisum by Majid) of Lie bialgebras requires the introduction of a significantly more general framework and the appending of an additional term in the Lie bialgebra axiom. Motivated by the bosonization theory of braided Hopf algebras in the largest currently known categorical setting, see 
e.g. \cite[Ch.~3]{MR4164719}, we use 
the notion of a Cartier category, which is nothing but a preadditive symmetric monoidal category together with an infinitesimal braiding. This gives us a very general  categorical framework and, quite surprisingly to the authors, allows a consistent discussion of bosonization. It should also be mentioned that most ideas regarding bosonization, except the use of the infinitesimal braiding, are already available in \cite{MR1744574}. Majid himself writes that due to lack of examples (this was around the year 2000) it is not clear to him which framework is most suitable for his presentation. Also, Majid is not explaining how bosonization works for his braided Lie bialgebras.

Our second main motivation for the present work was an observation on some examples of Nichols algebras over some abelian groups. In \cite{MR4298502} new examples of pointed Hopf algebras of finite Gelfand-Kirilliv dimension appeared and were described very explicitly.
During a visit of Hector Pe\~na Pollastri in February 2022 in Marburg, he and the first named author observed in some of these examples an appearance of Lie bialgebras in that context. We do not aim in this paper a detailed analysis of the precise connection. Nevertheless, we present some examples of Lie bialgebras in some Cartier categories with non-trivial infinitesimal braiding --- more precisely, in the category of crossed modules over some abelian coabelian Lie bialgebras, with possibly non-trivial braiding --- and point out the Nichols algebras they are related~to.

\medskip
The paper is organized as follows. Section~\ref{section:additional} recalls 
Cartier categories. In Sections~\ref{section:lie} 
and~\ref{section:lieco} we recall basic definitions of Lie algebras
and Lie coalgebras in symmetric monoidal categories. Lie bialgebras
in Cartier categories appear in Section~\ref{section:liebi}. 
In Section~\ref{section:bosonization} (see Theorems~\ref{thm:kerpicrossed} and
\ref{thm:doublesum}) we discuss bosonization of Lie bialgebras 
in Cartier categories. Concrete examples (the Jordan plane, the super Jordan plane, and the Laistrygonians)
are discussed in Section~\ref{section:examples}. 

\section{Symmetric monoidal categories with additional structure}
\label{section:additional}

In this paper, Lie bialgebras and variations of them will be objects in symmetric monoidal categories with additional structure. For convenience we will assume that the category is strict.
In the literature such an assumption is not unusual, see e.g.~\cite{MR3322335}.
In this section the most important properties of such categories are collected, which will be used freely in the sequel. Typically we write $\tau$ for the symmetry of such a category 
and use occasionally the leg notation:
$$ \tau_{i(i+1)}=\id^{\ot i-1}\ot \tau \otimes \id ^{\ot j}\in \End (V^{\ot i+j+1}) $$
for all objects $V$ and all $i\ge 1$, $j\ge 0$.

Recall that a preadditive category is a category where the morphisms between any two objects form an abelian group, and composition of morphisms satisfies the distributive law. A preadditive symmetric monoidal category is a symmetric monoidal category which is preadditive and the tensor functor is additive, that is, tensor product and addition of morphisms satisfy the distributive laws.

For our purpose we will need preadditive symmetric monoidal categories with an additional ingredient.

\begin{defn}
  Let $\cC $ be a preadditive symmetric monoidal category
  with identity object $I$ and symmetry $\tau $, together with a natural transformation 
  $$\eta=(\eta_{X,Y})_{X,Y\in \cC}$$
  from the monoidal functor $\otimes: \cC\times \cC\to \cC$ to itself.
  Assume that $\eta$ satisfies the following conditions:
  \begin{align}
    \label{eq:hs2}
    \eta_{X,Y\ot Z}&=\eta_{X,Y}\ot \id_Z
    +(\tau_{Y,X}\ot \id_Z)(\id_Y\ot \eta_{X,Z})(\tau_{X,Y}\ot \id_Z),\\
    \label{eq:hs4}
    \eta _{Y,X}\tau _{X,Y}&=\tau _{X,Y}\eta_{X,Y}
  \end{align}
  for all $X,Y,Z\in \cC$.
  
  Then we say that $(\cC,\eta)$ is a \textbf{Cartier category}, and following 
  Cartier we 
  call $\eta$
  its \textbf{infinitesimal braiding}.
\end{defn}

The notion goes back to Cartier \cite[\S4]{MR1331627}. In
\cite[Section XX.4]{MR1321145} these categories (satisfying a mild additional assumption) are called \emph{infinitesimal symmetric
categories}.

Note that each preadditive symmetric monoidal category is Cartier with
$$ \eta_{X,Y}=0$$
for all $X,Y\in \cC$. We are going to show how Lie bialgebras and bosonization give rise to Cartier categories with non-zero infinitesimal braiding.

The following remarks follow more or less directly from the definitions; they are probably well-known.

\begin{rem}\
  \begin{enumerate}
      \item The axioms of a Cartier category imply in particular that
  \begin{align} \label{eq:hs1}
    \eta_{I,X}&=0=\eta_{X,I},\\
    \label{eq:hs3}
    \eta_{X\ot Y,Z}&=\id_X\ot \eta_{Y,Z}
    +(\id_X\ot \tau_{Z,Y})(\eta_{X,Z}\ot \id_Y)(\id_X\ot \tau_{Y,Z})
  \end{align}
  for all $X,Y,Z\in \cC$. Indeed, Equation~\eqref{eq:hs2} with $Y=Z=I$ says that
  $$ \eta_{X,I}=\eta_{X,I}+\eta_{X,I} $$
  and hence $\eta_{X,I}=0$. The rest follows by using also Equation~\eqref{eq:hs4}.
  
  \item Let $\cC$ be the (preadditive symmetric monoidal) category Vec with the flip as symmetry. Let $\eta $ be a natural transformation from the monoidal functor of $\cC$ to itself. By using rank one linear maps between (non-zero) vector spaces, it follows quickly that there exists a scalar $\lambda $ such that $\eta_{X,Y}=\lambda \id_{X\otimes Y}$  for all vector spaces $X,Y$. Thus \eqref{eq:hs1} implies that the only infinitesimal braiding of Vec is the zero natural transformation.

    \item Let $\Bbbk$ be a field, let $G$ be an abelian monoid, and let $\cC={}^{\Bbbk G}\mathcal{M}$ be the (preadditive symmetric monoidal) category of $\Bbbk G$-comodules (or, equivalently, $G$-graded vector spaces over $\Bbbk$). Again, the symmetry of the category is the flip. By generalizing the arguments for Vec, one concludes that the natural transformations from the monoidal functor of $\cC$ to itself correspond to maps $\chi:G\times G\to \Bbbk$ such that
$$ \eta_{X,Y}=\chi(g,h)\id_{X\ot Y} $$
for all $g,h\in G$ and homogeneous objects $X$ of $G$-degree $g$ and $Y$ of degree~$h$.

In this setting, $\eta $ is an infinitesimal braiding, that is, a natural transformation satisfying Equations \eqref{eq:hs2} and \eqref{eq:hs4}, if and only if $\chi $ is a symmetric additive bicharacter of $G$ with values in $\Bbbk$, that is,
$$ \chi (h_1,h_2)=\chi(h_2,h_1),\quad \chi(g,h_1+h_2)=\chi(g,h_1)+\chi(g,h_2)$$
for all $g,h_1,h_2\in G$.
  \end{enumerate}
\end{rem}

\section{Lie algebras in symmetric categories}
\label{section:lie}

Let $\cC$ be a preadditive symmetric monoidal category.
A \textbf{Lie algebra in} $\cC$ is a pair $(\fg,\beta )$, where $\fg $ is an object in $\cC$ and $\beta:\fg \ot \fg \to \fg$ is a morphism in $\cC $ such that
\begin{enumerate}
    \item $\beta (\id +\tau)=0$ ($\beta $ is antisymmetric) and
    \item $\beta (\beta \ot \id -(\id \ot \beta )(\id -\tau_{12}))=0$
    (Jacobi identity).
\end{enumerate}
If $\cC$ is Cartier, then a Lie algebra in $\cC$ is just a Lie algebra in the underlying preadditive symmetric monoidal category.
 
The categorical context implies directly some compatibility conditions between the braiding $\tau $ and the bracket $\beta$ of a Lie algebra $\fg$ in $\cC$. We will typically use freely these identities. Two of the most frequent identities are
\[ \tau(\beta \otimes \id)(\id \ot \tau )=(\id \otimes \beta )(\tau \ot \id ): \fg  \ot V\ot \fg \to V\ot \fg \]
and
\[
\tau(\id \otimes \beta )(\tau \ot \id )=(\beta \ot \id )(\id \ot \tau ): \fg  \ot V\ot \fg \to \fg \ot V 
\]
for all objects $V\in \cC$.

Another consequence is the following equivalent form of the Jacobi identity:
\begin{align} \label{eq:Jacobieq}
\beta(\id \ot \beta)(\id +\tau_{23}\tau_{12}+\tau_{12}\tau_{23})=0.
\end{align}

Let $(\ff ,\beta)$ be a Lie algebra in $\cC$.
A  \textbf{left Lie module over $\ff$ in} $\cC$
is a pair $(V,\lma{}{})$, where $V$ is an object in $\cC$ and $\lma{}{}:\ff\ot V\to V$ is a morphism in $\cC$ such that
the diagram
\[
\begin{tikzcd}
	{\ff\ot\ff\ot V} & & {\ff\ot \ff \ot V} & & \ff \ot V \\
	\ff \ot V & & & & V
	\arrow["\id-\tau_{12}", from=1-1, to=1-3]
	\arrow["\id \ot \lma{}{}", from=1-3, to=1-5]
	\arrow["{\beta\ot\id}"', from=1-1, to=2-1]
	\arrow["{\lma{}{}}", from=1-5, to=2-5]
	\arrow["{\lma{}{} }", from=2-1, to=2-5]
\end{tikzcd}
\]
commutes. If we want to be more explicit, we use a notation for $\lma{}{} $ indicating  $V$,  i.e.\
$\lma{}{}=\lma{}{V}$.
We write $\lmC{\ff}$ for the category where the objects are left Lie modules over $\ff$ in $\cC$, and
the morphisms between two objects $(V,\lma{}{V})$
and $(W,\lma{}{W})$ are the morphisms $f:V\to W$ in $\cC$ with
$$ \lma{}{W}(\id \ot f)=f\lma{}{V}. $$
Since $\cC$ is strict, 
the category $\lmC{\ff}$ is preadditive, strict monoidal and symmetric, where the monoidal structure is given by the diagonal action
\begin{equation}
    \label{eq:alpha_tensor}
    \lma{}{V\ot W}=\lma{}V\ot \id_W+(\id_V\ot \lma{}W)(\tau_{\ff,V}\ot \id_W)
\end{equation}
for any $(V,\lma{}V)$, $(W,\lma{}W)$ in $\lmC{\ff}$, and the braiding of $\lmC{\ff}$ is the braiding $\tau $ of $\cC$.

Note that each Lie algebra $(\ff,\beta)$ in $\cC$ is a left Lie module over $\ff$ in $\cC$ with module structure $\lma{}\ff=\beta $. Therefore each tensor power of $\ff$ is a left Lie module over $\ff$ in $\cC$.

Let $(\ff,\beta_{\ff})$ be a Lie algebra in $\cC$ and let
$(\fg,\beta_{\fg})$ be a Lie algebra in $\lmC{\ff}$ with $\ff$-action $\lma{}{\fg}$. Assume that the biproduct $\fg \oplus \ff$ exists in $\cC$. Then there is a unique morphism
$$ \beta: (\fg \oplus \ff)\ot (\fg \oplus \ff)\to \fg \oplus \ff $$
in $\cC $ such that
\begin{equation}
\label{eq:beta}    
\begin{aligned}
  \beta(\iota_{\ff}\ot \iota_{\ff})&=\iota_{\ff}\beta_{\ff} ,&
  \beta(\iota_{\fg}\ot \iota_{\fg})&=\iota_{\fg}\beta_{\fg},\\
\beta(\iota_{\ff}\ot \iota_{\fg})&=\iota_{\fg}\lma{}{\fg},&
\beta(\iota_{\fg}\ot \iota_{\ff})&=-\iota_{\fg}\lma{}{\fg}\tau_{\fg,\ff} ,
\end{aligned}
\end{equation}
where $\iota_{\ff}:\ff\to \fg \oplus \ff$ and
$\iota_{\fg}:\fg\to \fg \oplus \ff$ are the canonical monomorphisms.
Moreover, $(\fg \oplus \ff,\beta)$ is a Lie algebra in $\cC$
and is called the \textbf{semidirect sum of $\fg$ and $\ff $}.

\section{Lie coalgebras in symmetric categories}
\label{section:lieco}

Let $\cC$ be a preadditive symmetric monoidal category. A \textbf{Lie coalgebra in} $\cC$ is a pair $(\fg,\delta )$, where $\fg $ is an object in $\cC$ and $\delta:\fg \to \fg \ot \fg$ is a morphism in $\cC $ such that
\begin{enumerate}
    \item $(\id +\tau)\delta =0$ ($\delta $ is co-antisymmetric) and
    \item $(\delta \ot \id -(\id -\tau_{12})(\id \ot \delta ))\delta=0$
    (co-Jacobi identity).
\end{enumerate}
Similarly to Lie algebras, identities involving the braiding are often applied without explanation. Here are two frequently used identities for the cobracket $\delta $ of a Lie coalgebra $\fg$:
\[ (\id \ot \tau )(\delta \otimes \id)\tau =(\tau \ot \id )(\id \otimes \delta ):  V\ot \fg \to \fg  \ot V\ot \fg\]
and
\[
(\tau \ot \id )(\id \otimes \delta )\tau=(\id \ot \tau )(\delta \ot \id ): \fg \ot V\to
\fg  \ot V\ot \fg 
\]
for all objects $V\in \cC$.

The study of Lie coalgebras has a long tradition, see e.g.~\cite{MR594993}.

Let $(\ff ,\delta)$ be a Lie coalgebra in $\cC$.
A  \textbf{left Lie comodule over $\ff$ in} $\cC$
is a pair $(V,\lambda )$, where $V$ is an object in $\cC$ and $\lambda  :V\to \ff\ot V$ is a morphism in $\cC$ such that
the diagram
\[
\begin{tikzcd}
	{\ff\ot\ff\ot V} & & {\ff\ot \ff \ot V} & & \ff \ot V \\
	\ff \ot V & & & & V
	\arrow["\id-\tau_{12}", to=1-1, from=1-3]
	\arrow["\id \ot \lambda ", to=1-3, from=1-5]
	\arrow["{\delta \ot\id}"', to=1-1, from=2-1]
	\arrow["\lambda ", to=1-5, from=2-5]
	\arrow["\lambda ", to=2-1, from=2-5]
\end{tikzcd}
\]
commutes. We write $\lcmC{\ff}$ for the category where the objects are left Lie comodules over $\ff$ in $\cC$, and the morphisms between two objects $(V,\lambda_V)$ and $(W,\lambda_W)$ are
the morphisms $g:V\to W$ in $\cC$ such that
$$ \lambda_W g=(\id \ot g)\lambda_V. $$
The category $\lcmC{\ff}$ is preadditive strict monoidal and symmetric, where the monoidal structure is given by the diagonal coaction on tensor products,
\begin{align} \label{eq:lambda_tensor}
    \lambda_{V\ot W}=\lambda_V\ot \id_W+(\tau_{V,\ff}\ot \id_W)(\id_V\ot \lambda_W)
\end{align}
for any $(V,\lambda_V)$, $(W,\lambda_W)$ in $\lcmC{\ff}$, and the braiding of $\lcmC{\ff}$ is the braiding of $\cC$.

Note that each Lie coalgebra $(\fg,\delta)$ in $\cC$ is a left Lie comodule over $\fg$ in $\cC$ with comodule structure $\lambda_{\fg}=\delta $. Therefore each tensor power of $\fg$ is a left Lie comodule over $\fg$ in $\cC$.

Let $(\ff,\delta_{\ff})$ be a Lie coalgebra in $\cC$ and let
$(\fg,\delta_{\fg})$ be a Lie coalgebra in $\lcmC{\ff}$ with $\ff$-coaction $\lambda_{\fg}$. Assume that the biproduct $\fg \oplus \ff$ of $\fg$ and $\ff$ exists in $\cC$. Then there is a unique morphism
$$ \delta: \fg \oplus \ff\to (\fg \oplus \ff)\ot (\fg \oplus \ff) $$
in $\cC $ such that
\begin{equation}
\label{eq:delta}
\begin{aligned}
  (\pi_{\ff}\ot \pi_{\ff})\delta &=\delta_{\ff}\pi_{\ff} ,&
  (\pi_{\fg}\ot \pi_{\fg})\delta&=\delta_{\fg}\pi_{\fg},\\
(\pi_{\ff}\ot \pi_{\fg})\delta&=\lambda _{\fg}\pi_{\fg},&
(\pi_{\fg}\ot \pi_{\ff})\delta&=-\tau_{\ff,\fg}\lambda_{\fg} \pi_{\fg},
\end{aligned}
\end{equation}
where $\pi_{\fg}:\fg \oplus \ff\to \fg$ and
$\pi_{\ff}:\fg \oplus \ff\to \ff $ are the canonical epimorphisms.
In particular,
\begin{align}
\label{eq:deltaiotaf}
  \delta \iota_\ff&=((\iota_\ff \pi _\ff +\iota_\fg \pi_\fg)\ot (\iota_\ff \pi _\ff +\iota_\fg \pi_\fg))\delta \iota_\ff =(\iota_\ff \ot \iota_\ff)\delta_\ff,\\
\label{eq:deltaiotag}
  \delta \iota_\fg&=(\iota_\fg \ot \iota_\fg)\delta_\fg
  +(\id -\tau)(\iota_\ff\ot \iota_\fg)\lambda_\fg.
\end{align}
Moreover, $(\fg \oplus \ff,\delta)$ is a Lie coalgebra in $\cC$
and is called the \textbf{semidirect sum of $\fg$ and $\ff $}.

\section{Lie bialgebras in Cartier categories}
\label{section:liebi}

Let $\cC=(\cC,\eta)$ be a Cartier preadditive symmetric monoidal category.
A \textbf{Lie bialgebra in} $\cC$ (or \textbf{curved Lie bialgebra}\footnote{The authors would like to thank Abdenacer Makhlouf for suggesting us the denotation curved Lie bialgebra.}) is a triple $(\fg,\beta,\delta)$, where $(\fg,\beta)$ is a Lie algebra in (the underlying preadditive symmetric monoidal category) $\cC$, $(\fg,\delta)$ is a Lie coalgebra in $\cC$, and
\begin{equation}
\label{eq:bialgebra_compatibility}
    \delta \beta =(\id -\tau )(\beta \ot \id )(\id \ot \tau  )(\delta \ot \id )(\id -\tau ) 
    +(\tau -\id)\eta
\end{equation}
as endomorphisms of $\fg\ot \fg$ in $\cC$. The compatibility condition \eqref{eq:bialgebra_compatibility}
has many other equivalent formulations. One of them
is the following:
\begin{align}
\label{eq:bialgebra_compatibility2}
\begin{aligned}
  \delta \beta =&(\beta \ot \id)(\id \ot \delta)
  +(\id \ot \beta )(\tau \ot \id)(\id \ot \delta)\\
  &+(\id \ot \beta )(\delta \ot \id)
  +(\beta \ot \id)(\id \ot \tau)(\delta \ot \id)
  +(\tau-\id )\eta.
\end{aligned}
\end{align}

In particular, it would be possible to replace $(\beta \ot \id)(\id \ot \tau)(\delta \ot \id)$ in
\eqref{eq:bialgebra_compatibility}
by $(\beta \ot \id)(\id \ot \delta)$ or by $(\id \ot \beta)(\delta\ot \id)$. For the upcoming presentation, in particular, in view of the definition of crossed modules below, we found the form in \eqref{eq:bialgebra_compatibility}  to be most convenient.

\begin{rem}
  Our definition of a Lie bialgebra in a Cartier category is a far reaching but very natural generalization of the notion of a Lie bialgebra. In this paper we even take the perspective that a categorical notion of a Lie bialgebra is only possible after fixing an infinitesimal braiding (which also may be zero) for the category.
\end{rem}

Let $(\ff,\beta,\delta)$ be a Lie bialgebra in $\cC$.
A \textbf{(left) crossed module over $\ff$ in} $\cC$ is a triple $(V,\lma{}{},\lambda)$, where $(V,\lma{}{})\in \lmC{\ff}$,
$(V,\lambda)\in \lcmC{\ff}$, and
\begin{equation} 
\label{eq:lcrm}
\lambda \lma{}{} =(\beta \ot \id)(\id \ot \lambda)
+(\id \ot \lma{}{})(\tau_{\ff,\ff}\ot \id)(\id \ot \lambda)
+(\id \ot \lma{}{})(\delta \ot \id)-\eta
\end{equation}
as endomorphisms of $\ff\ot V$ in $\cC$.
Let $\lcrmC{\ff}$ denote the
category of left crossed modules over $\ff $ in $\cC$,
where morphisms are left Lie module and left Lie comodule morphisms in $\cC$. Recall the diagonal action from Equation~\eqref{eq:alpha_tensor} and the diagonal coaction from Equation~\eqref{eq:lambda_tensor} of $\ff$ on tensor products.
The proof of the following lemma is straightforward and is left to the reader.

\begin{lem} \label{le:lcrmCmonoidal}
The category $\lcrmC{\ff}$ is preadditive symmetric monoidal, where the identity is the identity of $\cC$ with zero action and coaction, the action and the coaction of $\ff$ on tensor products are diagonal, and the braiding is the braiding of $\cC$.
\end{lem}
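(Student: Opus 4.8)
The strategy is to build everything on the monoidal structures of $\lmC{\ff}$ and $\lcmC{\ff}$ that were already described, so that the only genuinely new point to check is that the crossed module compatibility \eqref{eq:lcrm} is inherited by the unit object and by tensor products.

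Preadditivity is immediate: the morphisms in $\lcrmC{\ff}$ from $V$ to $W$ form the intersection of the subgroups $\operatorname{Hom}_{\lmC{\ff}}(V,W)$ and $\operatorname{Hom}_{\lcmC{\ff}}(V,W)$ of $\operatorname{Hom}_{\cC}(V,W)$, hence a subgroup, and composition is bilinear because it is so in $\cC$. The object $I$ of $\cC$ with zero action and zero coaction is a Lie module and a Lie comodule over $\ff$, it is the unit of $\lmC{\ff}$ and of $\lcmC{\ff}$, and it satisfies \eqref{eq:lcrm}: on $\ff\ot I=\ff$ the left-hand side and the first three terms on the right-hand side vanish since $\lma{}{I}=0$ and $\lambda_I=0$, while $\eta_{\ff,I}=0$ by \eqref{eq:hs1}. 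The associativity and unit constraints, the pentagon, triangle and hexagon identities, and $\tau^2=\id$ all hold in $\lcrmC{\ff}$ because they hold in $\cC$; that the diagonal action and coaction on $(U\ot V)\ot W$ and on $U\ot(V\ot W)$ agree and that $\tau_{V,W}$ and each $f\ot g$ respect action and coaction are exactly the statements that $\lmC{\ff}$ and $\lcmC{\ff}$ are monoidal and symmetric.

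It therefore remains to prove that if $(V,\lma{}{V},\lambda_V)$ and $(W,\lma{}{W},\lambda_W)$ are crossed modules over $\ff$, then $V\ot W$, equipped with the diagonal action \eqref{eq:alpha_tensor} and the diagonal coaction \eqref{eq:lambda_tensor}, again satisfies \eqref{eq:lcrm} as an endomorphism of $\ff\ot V\ot W$. I would do this by direct expansion: substitute the diagonal formulas for $\lma{}{V\ot W}$ and for $\lambda_{V\ot W}$ on both sides of \eqref{eq:lcrm}, and expand $\eta_{\ff,V\ot W}$ using \eqref{eq:hs2}. Pushing the braidings through the various legs by naturality of $\tau$ and $\eta$ and by the Lie module and Lie comodule axioms, one sorts the resulting terms into those acting through $V$ alone, those acting through $W$ alone, and the genuinely mixed ones. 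The first group is accounted for by \eqref{eq:lcrm} for $V$ tensored with $\id_W$, with the $-\eta_{\ff,V}$ on its right-hand side supplied by the first summand of the expansion of $\eta_{\ff,V\ot W}$; the second group is accounted for by \eqref{eq:lcrm} for $W$, conjugated by the appropriate $\tau$'s, with the $-\eta_{\ff,W}$ supplied by the second summand. The mixed terms on the two sides then have to match on their own, which follows from the compatibilities of $\beta$ and $\delta$ with $\tau$ recorded in Sections~\ref{section:lie} and~\ref{section:lieco}, the Jacobi and co-Jacobi identities, and \eqref{eq:hs3} and \eqref{eq:hs4}. The only real difficulty is the bookkeeping: after the braidings have been moved around one must keep precise track of which $\ff$-leg each occurrence of $\beta$, $\delta$, $\lma{}{}$, $\lambda$ and $\eta$ refers to, and group the terms so that the cancellations become visible. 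No hypothesis on $\cC$ beyond the Cartier axioms is needed, which is why the computation, though lengthy, presents no conceptual obstacle.
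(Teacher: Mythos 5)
Your proposal is correct; the paper itself gives no argument here (the proof is explicitly left to the reader), and your direct verification --- preadditivity and the monoidal/symmetry data inherited from $\lmC{\ff}$ and $\lcmC{\ff}$, the unit with zero action and coaction satisfying \eqref{eq:lcrm} via \eqref{eq:hs1}, and the expansion of \eqref{eq:lcrm} for $V\ot W$ using \eqref{eq:hs2} --- is exactly the intended straightforward check. One small remark: your list of tools for the mixed terms overshoots; once the crossed module axioms for $V$ and $W$ absorb the unmixed terms and the two summands of $\eta_{\ff,V\ot W}$, the remaining matching uses only naturality of $\tau$ (and of $\eta$) together with the $\tau$-compatibilities of $\beta$ and $\delta$ recorded in Sections~\ref{section:lie} and~\ref{section:lieco}; the Jacobi and co-Jacobi identities and Equations~\eqref{eq:hs3}, \eqref{eq:hs4} are not needed at this point.
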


We will improve Lemma~\ref{le:lcrmCmonoidal} in Proposition~\ref{pr:cmfriendly} below.

\begin{rem}\
  \begin{enumerate}
      \item A Lie bialgebra $(\ff,\beta,\delta)$ in $\cC$ is typically not an object in $\lcrmC{\ff}$ via Lie action $\beta$ and Lie coaction $\delta$.

  However, if we compare \eqref{eq:bialgebra_compatibility2} 
  and \eqref{eq:lcrm}, $(\ff,\beta,\delta)$ is an object in $\lcrmC{\ff}$ if and only if
  $(\beta\otimes\id)(\id\otimes\tau)(\delta\otimes\id)+\tau \eta=0$.
 The latter happens if and only if 
\begin{gather}
\eta=(\id\otimes\beta)(\delta\otimes\id).
\end{gather}
It follows that if $(\ff,\beta,\delta)$ is an object in $\lcrmC{\ff}$, then $\delta\beta=(\id-\tau)\eta $.

  \item Let $(\ff,\beta,\delta)$ be a Lie bialgebra in $\cC$ and let $(V,\alpha_V,\lambda _V)\in \lcrmC{\ff}$. Then $$ (\ff \ot V,\alpha_{\ff \ot V})\in \lmC{\ff},\qquad
  (\ff \ot V,\lambda_{\ff \ot V})\in \lcmC{\ff},$$
  and
  \begin{align*}
    \lambda_V \alpha_V-(\id \ot \alpha_V)\lambda_{\ff \ot V}=(\beta \ot \id)(\id \ot \lambda)-\eta_{\ff,V},\\
    \lambda_V \alpha_V-\alpha_{\ff \ot V}(\id \ot \lambda_V)=(\id \ot \alpha)(\delta \ot \id)-\eta_{\ff,V}.
  \end{align*}
  In particular, typically $\alpha_V:\ff \ot V\to V$ is not a morphism in $\lcmC{\ff}$ and
  $\lambda _V:V\to \ff \ot V$ is not a morphism in $\lmC{\ff}$.
  \end{enumerate}
\end{rem}

\begin{lem} \label{le:zetatensor}
Let $(\ff,\beta,\delta)$ be a Lie bialgebra in the Cartier  category $\cC$.
For each pair $(V,\alpha_V,\lambda_V)$ and $(W,\alpha_W,\lambda_W)$ of objects in $\lcrmC{\ff}$ let
  $$ \zeta_{V,W}=(\lma{}{W}\ot \id_V)(\id_\ff \ot \tau _{V,W})(\lambda_V \ot \id _W):V \ot W \to W\ot V. $$
  \begin{enumerate}
      \item For all morphisms $f:V_1\to V_2$, $g:W_1\to W_2$ in $\lcrmC{\ff}$,
      $$ \zeta_{V_2,W_2}(f\ot g)=(g\ot f)\zeta_{V_1,W_1}. $$
      \item For all $X,Y,Z\in \lcrmC{\ff}$,
      \begin{align*}
        \zeta_{X,Y\ot Z}&=
        (\id _Y\ot \tau_{X,Z}) (\zeta_{X,Y}\ot \id_Z)
        +(\id_Y\ot \zeta_{X,Z}) (\tau_{X,Y}\ot\id_Z),\\
        \zeta_{X\ot Y,Z}&=
        (\zeta_{X,Z}\ot \id_Y)(\id _X\ot \tau_{Y,Z})
        +(\tau_{X,Z}\ot \id_Y)(\id _X\ot \zeta_{Y,Z}).
      \end{align*}
  \end{enumerate}
\end{lem}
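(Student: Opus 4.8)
The plan is to verify the three identities in turn by expanding each side using the defining formulas for $\zeta$, the diagonal action \eqref{eq:alpha_tensor}, the diagonal coaction \eqref{eq:lambda_tensor}, and the naturality of $\tau$; the crossed-module axiom \eqref{eq:lcrm} and the Cartier axioms \eqref{eq:hs2}--\eqref{eq:hs3} will only be needed for part (2), and in fact I expect them to cancel out completely, so that part (1) and part (2) are both purely "categorical" statements about how the string $\alpha \circ (\id\ot\tau)\circ \lambda$ interacts with diagonal (co)actions. The key computational device throughout is the pair of slide identities recorded after the definitions of Lie algebras and Lie coalgebras, namely $\tau(\beta\ot\id)(\id\ot\tau)=(\id\ot\beta)(\tau\ot\id)$ and $(\id\ot\tau)(\delta\ot\id)\tau=(\tau\ot\id)(\id\ot\delta)$ and their mirror images, together with naturality of $\tau$ with respect to $\alpha_V$ and $\lambda_V$.

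First I would prove part (1). A morphism $f$ in $\lcrmC{\ff}$ satisfies both $\alpha_{V_2}(\id\ot f)=f\alpha_{V_1}$ and $\lambda_{V_2}f=(\id\ot f)\lambda_{V_1}$, and similarly for $g$. Starting from $\zeta_{V_2,W_2}(f\ot g)=(\alpha_{W_2}\ot\id)(\id\ot\tau_{V_2,W_2})(\lambda_{V_2}\ot\id)(f\ot g)$, I would push $f$ to the left through $\lambda_{V_2}$ using Lie-comodule-morphism-ness (producing $\id\ot f$ in the middle leg), slide the resulting $f$ and $g$ past $\tau$ by naturality, and finally absorb $g$ into $\alpha_{W_2}$ using Lie-module-morphism-ness. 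This is a short diagram chase with no use of the Cartier structure.

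Next comes part (2), which is the main obstacle. Consider the first identity. On the right-hand side $\lambda_{X}$ stays the same, but $\alpha_{Y\ot Z}$ and $\lambda_{Y\ot Z}$ are built diagonally, so substituting \eqref{eq:alpha_tensor} and \eqref{eq:lambda_tensor} into $\zeta_{X,Y\ot Z}$ produces two terms that, after repeated application of the slide identities and naturality of $\tau$, should match the two terms $(\id_Y\ot\tau_{X,Z})(\zeta_{X,Y}\ot\id_Z)$ and $(\id_Y\ot\zeta_{X,Z})(\tau_{X,Y}\ot\id_Z)$ on the left. The delicate point is bookkeeping of the many $\tau$'s on the various legs of $X\ot Y\ot Z \to Y\ot Z\ot X$ and checking that the crossed-module relation \eqref{eq:lcrm} is genuinely not needed here — intuitively it is not, because $\zeta$ only ever feeds $\lambda_V$ and $\alpha_W$ on \emph{different} objects, never the same one, so the action-coaction compatibility of a single object is never invoked. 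The second identity is entirely analogous, with the roles of diagonal action and diagonal coaction interchanged and the mirrored slide identities used instead; alternatively one can derive it from the first by applying the symmetry $\tau$, using \eqref{eq:hs4} only insofar as it guarantees $\tau$ is compatible with $\eta$ (but since $\eta$ does not appear, even that is unnecessary). I would present the first identity in full and dispatch the second by symmetry.

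The only real risk is a sign or leg-index slip in the $\tau$-bookkeeping for part (2); to control this I would fix once and for all the convention $\tau_{i(i+1)}$ from the start of Section~\ref{section:additional} and work strictly in that notation, rewriting $\tau_{X,Y\ot Z}=\tau_{12}\tau_{23}$ and $\tau_{X\ot Y,Z}=\tau_{23}\tau_{12}$ (on triple tensor products in the appropriate order) before expanding, so that every step is an honest equality of composites of $\tau_{12},\tau_{23},\alpha,\lambda$ on a fixed four- or five-fold tensor product.
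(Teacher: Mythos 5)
Your plan is correct and matches the paper's proof, which simply observes that both parts follow directly from the definitions of $\zeta$, the diagonal action \eqref{eq:alpha_tensor} and coaction \eqref{eq:lambda_tensor}, and the naturality of $\tau$ (applied to $\alpha$, $\lambda$, $f$, $g$); as you anticipate, neither the crossed-module axiom \eqref{eq:lcrm} nor the Cartier structure enters. Only a cosmetic remark: the slide identities involving $\beta$ and $\delta$ play no role (the Lie bialgebra structure of $\ff$ never appears in $\zeta$), and for $\zeta_{X,Y\ot Z}$ only the diagonal action \eqref{eq:alpha_tensor} is expanded, the coaction being that of the single object $X$ (dually for $\zeta_{X\ot Y,Z}$).
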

  
Note that the morphisms $\zeta_{V,W}$ in the lemma are typically not morphisms in $\lcrmC{\ff}$, not even if $\eta=0$. Nevertheless they are useful to discuss the morphisms in Lemma~\ref{le:zeta}(2) below.

\begin{proof}
  Both claims follow directly from the definitions (including the definitions of the $\ff$-action and of the $\ff$-coaction on a tensor product of two objects) and the naturality of the symmetry $\tau $.
\end{proof}

\begin{lem} \label{le:zeta}
  Let $(\ff,\beta ,\delta )$ be a Lie bialgebra in the Cartier category 
  $\cC$. For each pair $(V,\lma{}{V},\lambda_V)$ and 
  $(W,\lma{}{W},\lambda_W)$ in $\lcrmC{\ff}$ let
\begin{align*}
    \zeta_{V,W}&=(\lma{}{W}\ot \id_V  )
    (\id _\ff\ot \tau _{V,W})(\lambda_V \ot \id _W):
    V\ot W\to W\ot V\\
    \intertext{(as in Lemma~\ref{le:zetatensor}),}
    \hat{\alpha}_{V,W}&=(\lma{}{V}\ot \lma{}{W})
    (\id _\ff\ot \tau _{\ff ,V}\ot \id_W)
    (\delta \ot \id _{V\ot W}):
    \ff \ot V\ot W\to V\ot W,\\
    \hat{\lambda}_{V,W}&=(\beta \ot \id_{V\ot W})
    (\id _\ff\ot \tau _{V,\ff }\ot \id_W)
    (\lambda_V\ot \lambda_W):
    V\ot W\to \ff \ot V\ot W.
\end{align*}
  \begin{enumerate}
      \item
For all $(V,\lma{}{V},\lambda_V),
(W,\lma{}{W},\lambda_W)\in \lcrmC{\ff}$ the following equations hold.
  \begin{align*}
    \tau_{V,W}\hat{\alpha}_{V,W}=
    -\hat{\alpha}_{W,V}(\id_\ff \ot \tau_{V,W}), \qquad 
    \hat{\lambda}_{W,V}\tau_{V,W}&=
    -(\id_\ff\ot\tau_{V,W})\hat{\lambda}_{V,W},\\
    \zeta_{V,W} \lma{}{V\ot W}
    +\tau_{V,W}\hat{\alpha}_{V,W}
    +(\alpha\ot\id)(\id\ot\tau)(\eta_{\ff,V}\ot\id_W)
    &=
    \lma{}{W\ot V}(\id_\ff \ot \zeta_{V,W})
    ,\\
    \lambda_{W\ot V}\zeta_{V,W}+
    \hat{\lambda}_{W,V}\tau _{V,W}
    +(\eta\ot\id)(\id\ot\tau)(\lambda_V\ot\id_W)&=
        (\id _\ff\ot \zeta_{V,W})\lambda_{V\ot W}
    .
  \end{align*}
  \item
    The morphism
    $$ \crmh_{V,W}=\zeta_{W,V}\tau_{V,W}
  +\tau_{W,V}\zeta_{V,W}+\eta_{V,W}
  $$
  in $\cC$ is an endomorphism of $V\ot W$ in $\lcrmC{\ff}$.
  \end{enumerate}
\end{lem}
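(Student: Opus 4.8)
### Proof proposal

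The plan is to verify directly that $\crmh_{V,W}$ is simultaneously a morphism in $\lmC{\ff}$ and in $\lcmC{\ff}$, i.e.\ that it commutes with the diagonal $\ff$-action $\lma{}{V\ot W}$ and intertwines the diagonal $\ff$-coaction $\lambda_{V\ot W}$ in the appropriate sense. Since $\crmh_{V,W}$ is built from $\zeta$, $\tau$ and $\eta$, all three of which have known ``naturality-type'' behaviour against the action and coaction (via Lemma~\ref{le:zetatensor}, the Cartier axioms \eqref{eq:hs2}--\eqref{eq:hs4}, and part~(1) of the present lemma), this is a bookkeeping computation rather than a conceptual one. Concretely, I would fix the module half first: compute $\lma{}{V\ot W}(\id_\ff\ot \crmh_{V,W})$ and $\crmh_{V,W}\,\lma{}{V\ot W}$ and show they agree; then repeat the dual computation for the coaction, showing $\lambda_{V\ot W}\crmh_{V,W} = (\id_\ff\ot\crmh_{V,W})\lambda_{V\ot W}$.

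The key inputs, step by step, are as follows. First, the two intertwining-type identities for $\zeta$ in Lemma~\ref{le:zeta}(1): the third displayed equation there rewrites $\lma{}{W\ot V}(\id_\ff\ot\zeta_{V,W})$ in terms of $\zeta_{V,W}\lma{}{V\ot W}$ plus the ``error'' terms $\tau_{V,W}\hat\alpha_{V,W}$ and $(\alpha\ot\id)(\id\ot\tau)(\eta_{\ff,V}\ot\id_W)$, and dually for the coaction. Second, applying the same pair of identities to the swapped pair $(W,V)$ and conjugating by the relevant $\tau$'s (using \eqref{eq:hs4} and the naturality of $\tau$) gives the analogous formulas for $\zeta_{W,V}\tau_{V,W}$ and $\tau_{W,V}\zeta_{V,W}$. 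Third, the skew-symmetry relations $\tau_{V,W}\hat\alpha_{V,W}=-\hat\alpha_{W,V}(\id_\ff\ot\tau_{V,W})$ and $\hat\lambda_{W,V}\tau_{V,W}=-(\id_\ff\ot\tau_{V,W})\hat\lambda_{V,W}$ from Lemma~\ref{le:zeta}(1). Fourth, the Cartier compatibility \eqref{eq:hs2}, \eqref{eq:hs3} describing how $\eta_{V,W}$ interacts with $\lma{}{V\ot W}$ and $\lambda_{V\ot W}$ on the tensor factors. Assembling the $\zeta_{W,V}\tau_{V,W}+\tau_{W,V}\zeta_{V,W}+\eta_{V,W}$ combination, the $\hat\alpha$-terms coming from the two $\zeta$-summands should cancel against each other by the skew-symmetry relation, and the residual $\eta$-contributions (from moving the action/coaction past the $\zeta$'s) should cancel against the contributions from the explicit $\eta_{V,W}$ summand via \eqref{eq:hs2}/\eqref{eq:hs3}. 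The analogous cancellation of the $\hat\lambda$-terms handles the comodule side.

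The main obstacle I anticipate is managing the signs and the precise placement of the legs when combining the two ``error'' contributions: the terms $\tau_{V,W}\hat\alpha_{V,W}$ and $\hat\alpha_{W,V}(\id_\ff\ot\tau_{V,W})$ enter the two $\zeta$-summands with different bracketings of $\tau$, and one must check that after conjugating by $\tau_{W,V}$ resp.\ precomposing with $\tau_{V,W}$ they line up exactly so that the skew-symmetry identity produces a clean cancellation rather than a leftover term. Equally delicate is confirming that the three distinct $\eta$-flavoured summands --- the $(\alpha\ot\id)(\id\ot\tau)(\eta_{\ff,V}\ot\id_W)$ piece, its $V\leftrightarrow W$ counterpart, and the bare $\eta_{V,W}$ --- reorganize precisely into the right-hand and left-hand sides of \eqref{eq:hs2} (and dually \eqref{eq:hs3}), which is where the hypothesis that $\eta$ is an infinitesimal braiding, not merely a natural transformation, is actually used. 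Once those two cancellations are pinned down the rest is formal, so I would present the module-side verification in full detail and then remark that the comodule side follows by the evident dualization (reversing arrows, swapping $\beta\leftrightarrow\delta$, $\lma{}{}\leftrightarrow\lambda$, and using \eqref{eq:hs3} in place of \eqref{eq:hs2}).
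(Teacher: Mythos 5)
There is a genuine gap: your proposal only addresses part (2) of the lemma and takes part (1) as a known input (``the two intertwining-type identities for $\zeta$ in Lemma~\ref{le:zeta}(1)'', ``the skew-symmetry relations \dots from Lemma~\ref{le:zeta}(1)''). But part (1) is itself half of the statement to be proven, and it is where essentially all of the real work lies. The first two equations of (1) are indeed quick consequences of $\beta\tau=-\beta$, $\tau\delta=-\delta$ and naturality of $\tau$, but the third and fourth equations are not ``naturality-type bookkeeping'': they require expanding $\zeta_{V,W}\,\lma{}{V\ot W}$ via the diagonal action \eqref{eq:alpha_tensor}, invoking the crossed module axiom \eqref{eq:lcrm} to rewrite the composite $\lambda_V\lma{}{V}$ that appears inside (this is the sole source of the $\eta_{\ff,V}$ error term and of the $\hat{\alpha}$ term), then using the Lie module axiom for $(W,\lma{}{W})$ and cancelling the resulting terms against each other. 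Your write-up never mentions \eqref{eq:lcrm} at all, so the mechanism that produces the very correction terms your part-(2) argument is supposed to cancel is left unestablished; as it stands the proposal is circular in the sense that it proves (2) from (1) without proving (1).

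For part (2) itself your plan is essentially the paper's argument: substitute the third (resp.\ fourth) identity of (1) into the two $\zeta$-summands of $\crmh_{V,W}$, cancel the $\hat{\alpha}$ (resp.\ $\hat{\lambda}$) contributions using the skew-symmetry relations, and absorb the remaining $\eta$-flavoured terms into $\lma{}{V\ot W}(\id_\ff\ot\eta_{V,W})$ using naturality of $\eta$ together with \eqref{eq:hs2}/\eqref{eq:hs3} --- and also \eqref{eq:hs4}, which is needed to turn $\eta_{V,\ff}\tau_{\ff,V}$ into $\tau_{\ff,V}\eta_{\ff,V}$ in one of the four terms. One further point to make explicit: when you move $\tau_{V,W}$ past the diagonal action to apply the identity for the swapped pair, you are using that $\tau$ is a morphism in $\lmC{\ff}$ (resp.\ $\lcmC{\ff}$), which deserves a word. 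So to complete the proof you must supply the computation for the third and fourth equations of (1) (the fourth being dual to the third), after which your outline of (2) goes through.
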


\begin{proof}
(1) The equations
$$ \tau_{V,W}\hat{\alpha}_{V,W}
=-\hat{\alpha}_{W,V}(\id_\ff \ot \tau_{V,W}),
\qquad
  \hat{\lambda}_{W,V}\tau_{V,W}
  =-(\id_\ff \ot \tau _{V,W})\hat{\lambda}_{V,W}
$$
follow from $\tau \delta=-\delta$ and
$\beta \tau=-\beta$, respectively, and from the naturality of the symmetry $\tau $.

Now we are going to prove the third equation.
Recall that
$$ \lma{}{V\ot W}=\lma{}{V}\ot \id_W
+(\id _V\ot \lma{}{W})(\tau_{\ff,V}\ot \id_W).$$
Therefore
\begin{align*}
  \zeta_{V,W}\alpha_{V\ot W}&=
  (\lma{}{W}\ot \id_V)(\id _\ff\ot \tau _{V,W})(\lambda_V \ot \id _W)\\
  &\qquad \cdot \big(\lma{}{V}\ot \id_W
+(\id _V\ot \lma{}{W})(\tau_{\ff,V}\ot \id_W)\big)\\
&=(\lma{}{W}\ot \id_V)(\id _\ff \ot \tau _{V,W})
\big(
 -\eta_{\ff,V}\ot \id_W+
 (\beta \ot \id_{V\ot W})(\id _\ff \ot \lambda _V\ot \id_W)\\
 &\qquad +(\id _\ff \ot \lma{}{V}\ot \id_W)(\tau_{\ff,\ff }\ot \id_{V\ot W})(\id _\ff \ot \lambda_V\ot \id_W)\\
 &\qquad +(\id _\ff \ot \lma{}{V}\ot \id_W)(\delta \ot \id _{V\ot W})
 +(\lambda_V\ot \alpha_W)(\tau_{\ff,V} \ot \id_W)\big)\\
 \intertext{by the crossed module axiom \eqref{eq:lcrm}. In the second term we rewrite $\alpha_W(\beta \ot \id_W)$ using the Lie module axiom for $(W,\alpha_W)$ and obtain that}
 \zeta_{V,W}\alpha_{V\ot W}&=
 (\alpha_W\ot \id_V)(\id_\ff \ot \tau_{V,W})(-\eta_{\ff,V}\ot \id_W)\\
 &\quad+(\lma{}{W}\ot \id_V)(\id_\ff \ot \lma{}{W}\ot \id_V)((\id -\tau_{\ff,\ff})\ot \tau_{V,W})
 (\id_\ff \ot \lambda_V\ot \id_W)\\
 &\quad +(\id_W\ot \lma{}{V})
 (\tau_{\ff ,W}\ot \id_V)(\id _\ff\ot \zeta_{V,W})\\
 &\quad +(\lma{}{W}\ot \lma{}{V})(\id _\ff \ot \tau_{\ff , W}\ot \id_V)(\delta \ot \id_{W\ot V})(\id_\ff \ot \tau_{V,W})\\
&\quad +(\alpha_W\ot \id_V)(\id_\ff \ot \lma{}{W}\ot \id_V)(\tau_{\ff,\ff}\ot \tau_{V,W})(\id_\ff \ot \lambda_V  \ot \id_W).
\end{align*}    
Now the last term cancels with part of the second term, and the fourth term is $\hat{\alpha}_{W,V}(\id _\ff \ot \tau_{V,W})=-\tau_{V,W}\hat{\alpha}_{V,W}$.
It follows that
\begin{align*}
  \zeta_{V,W}\alpha_{V\ot W}&=
  (\alpha_W\ot \id_V)(\id_\ff \ot \tau_{V,W})(-\eta_{\ff,V}\ot \id_W)\\
  & \quad +(\alpha_W\ot \id_V)(\id_\ff \ot \alpha_W\ot \id_V)(\id \ot \tau_{V,W})(\id_\ff \ot \lambda_V\ot \id_W)\\
&\quad +(\id_W\ot \lma{}{V})
 (\tau_{\ff ,W}\ot \id_V)(\id _\ff\ot \zeta_{V,W})
 -\tau_{V,W}\hat{\alpha}_{V,W}.
 \end{align*}
 In the last expression, the sum of the second and the third term is $\alpha_{W\ot V}(\id_\ff \ot \zeta_{V,W})$.
 This implies the third equation in part (1) of the lemma.

The fourth equation of part (1) of the lemma can be proven similarly.

(2) We prove that $\crmh_{V,W}$ is an endomorphism of the $\ff$-module $V\ot W$. By definition of $\crmh_{V,W}$ and by (1),
\begin{align*}
    \crmh_{V,W}\lma{}{V\ot W}
    &=(\zeta_{W,V}\tau_{V,W}
    +\tau_{W,V}\zeta_{V,W}+\eta_{V,W}) \lma{}{V\ot W}\\
    &=\zeta_{W,V}\lma{}{W\ot V}(\id _\ff \ot \tau_{V,W})
    +\tau_{W,V}(\lma{}{W\ot V}(\id _\ff \ot \zeta_{V,W})
    -\tau_{V,W}\hat{\alpha}_{V,W})\\
    &\qquad +\tau (\alpha \ot \id)(\id \ot \tau)(-\eta_{\ff,V}\ot \id_W)+\eta_{V,W}\alpha_{V\ot W}\\
    &=\lma{}{V\ot W}(\id _\ff \ot \zeta_{W,V}\tau_{V,W})
    +\hat{\alpha}_{V,W}\\
    &\qquad +(\alpha \ot \id)(\id \ot \tau)(-\eta_{\ff,W}\ot \id_V)(\id_\ff \ot \tau_{V,W})\\
    &\qquad +\lma{}{V\ot W}(\id_\ff \ot 
    \tau_{W,V}\zeta_{V,W})
    -\hat{\alpha}_{V,W}\\
    &\qquad +\tau(\alpha \ot \id)(\id \ot \tau)(-\eta_{\ff,V}\ot \id_W)+\eta_{V,W}\alpha_{V\ot W}\\
    &=\lma{}{V\ot W}(\id_\ff \ot \crmh_{V,W}
    -\id_\ff \ot \eta_{V,W})\\
    &\qquad -(\alpha \ot \id)(\id \ot \tau)(\eta_{\ff,W}\ot \id_V)(\id_\ff \ot \tau_{V,W})\\
    &\qquad -\tau (\alpha \ot \id)(\id \ot \tau)(\eta_{\ff,V}\ot \id_W)+\eta_{V,W}\alpha_{V\ot W}.
\end{align*}
The naturality of $\eta$ and the compatibility conditions of $\eta$ and the monoidal structure imply that
\begin{align*}
    \eta_{V,W}\alpha_{V\ot W}
    &=\eta_{V,W}(\alpha_V\ot \id_W+(\id_V\ot \alpha_W)(\tau_{\ff,V}\ot \id_W))\\
    &=(\alpha_V\ot \id_W)\eta_{\ff \ot V,W}
    +(\id_V\ot \alpha_W)\eta_{V,\ff \ot W}(\tau_{\ff,V}\ot\id_W)\\
    &=(\alpha_V\ot \id_W)\big(\id _\ff \ot \eta_{V,W}
    +(\id_\ff \ot \tau_{W,V})(\eta_{\ff,W}\ot \id_V)(\id_\ff \ot \tau_{V,W})\big)\\
    &\qquad +(\id_V \ot \alpha_W)
    \big(\eta_{V,\ff}\tau_{\ff,V}\ot \id_W
    +(\tau_{\ff,V}\ot \id_W)
    (\id_\ff \ot \eta_{V,W})\big).
\end{align*}
After applying the distributive rule, we get four terms.
The first and the fourth terms yield $\alpha_{V\ot W}(\id_\ff \ot \eta_{V,W})$, and the third of the four terms is
\begin{align*}
  (\id_V \ot \alpha_W)(\eta_{V,\ff}\tau_{\ff,V}\ot \id_W)
  &=(\id_V \ot \alpha_W)
  (\tau_{\ff,V}\eta_{\ff,V}\ot \id_W)\\
 &=\tau _{W,V}(\alpha_W\ot \id_V)(\id _\ff \ot \tau_{V,W})(\eta_{\ff,V}\ot \id_W).
\end{align*}
Now it is easy to confirm that $\crmh_{V,W}\alpha_{V\ot W}=\alpha_{V\ot W}(\id_\ff \ot \crmh_{V,W})$.

Similarly, $\crmh_{V,W}$ is an endomorphism of the $\ff$-comodule $V\ot W$, which proves the claim in (2).
\end{proof}

\begin{pro} \label{pr:cmfriendly}
  Let $(\ff,\beta,\delta)$ be a Lie bialgebra in the Cartier category $\cC$, and for all $V,W\in \lcrmC{\ff}$ let $\crmh_{V,W}$ be the endomorphism of $V\ot W\in \lcrmC{\ff}$ from Lemma~\ref{le:zeta}(2). Then
  $\lcrmC{\ff}$ is a Cartier category with infinitesimal braiding $\crmh$.
\end{pro}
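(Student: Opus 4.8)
The plan is to verify the three defining properties of a Cartier category for the pair $(\lcrmC{\ff},\crmh)$: that $\crmh$ is a natural transformation from the tensor bifunctor of $\lcrmC{\ff}$ to itself, and that it satisfies \eqref{eq:hs2} and \eqref{eq:hs4}. The preadditive symmetric monoidal structure is already supplied by Lemma~\ref{le:lcrmCmonoidal}, and Lemma~\ref{le:zeta}(2) already guarantees that each $\crmh_{V,W}$ is an endomorphism of $V\ot W$ \emph{inside} $\lcrmC{\ff}$ (not merely in $\cC$), which is the one genuinely nontrivial input; so it remains only to check naturality in the pair $(V,W)$ and the two hexagon-type identities. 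Throughout, the strategy is to keep the decomposition $\crmh_{V,W}=\zeta_{W,V}\tau_{V,W}+\tau_{W,V}\zeta_{V,W}+\eta_{V,W}$ and treat each of the three summands separately, using on one hand the identities for $\zeta$ from Lemma~\ref{le:zetatensor} and on the other hand that $\eta$ is already an infinitesimal braiding of $\cC$ while $\tau$ is a symmetry.

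For naturality I would take morphisms $f\colon V_1\to V_2$ and $g\colon W_1\to W_2$ in $\lcrmC{\ff}$ and show $\crmh_{V_2,W_2}(f\ot g)=(f\ot g)\crmh_{V_1,W_1}$. The summand $\eta_{V_2,W_2}(f\ot g)=(f\ot g)\eta_{V_1,W_1}$ is just naturality of $\eta$ in $\cC$ (recall $f,g$ are in particular morphisms of $\cC$). For the other two summands one combines naturality of $\tau$ with the ``twisted naturality'' $\zeta_{V_2,W_2}(f\ot g)=(g\ot f)\zeta_{V_1,W_1}$ of Lemma~\ref{le:zetatensor}(1): for instance $\zeta_{W_2,V_2}\tau_{V_2,W_2}(f\ot g)=\zeta_{W_2,V_2}(g\ot f)\tau_{V_1,W_1}=(f\ot g)\zeta_{W_1,V_1}\tau_{V_1,W_1}$, and likewise for $\tau_{W_2,V_2}\zeta_{V_2,W_2}(f\ot g)$. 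Adding the three contributions gives the claim. Axiom \eqref{eq:hs4} is almost immediate: writing out $\crmh_{Y,X}\tau_{X,Y}$ and $\tau_{X,Y}\crmh_{X,Y}$ and cancelling $\tau_{X,Y}\tau_{Y,X}=\id$ wherever it occurs, both sides reduce to $\tau_{X,Y}\zeta_{Y,X}\tau_{X,Y}+\zeta_{X,Y}$ together with $\eta_{Y,X}\tau_{X,Y}$, respectively $\tau_{X,Y}\eta_{X,Y}$; the remaining two terms agree precisely by \eqref{eq:hs4} for $\eta$ in $\cC$.

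The computational heart is axiom \eqref{eq:hs2}, and this is where I expect the main (though still routine) work. I would expand $\crmh_{X,Y\ot Z}=\zeta_{Y\ot Z,X}\tau_{X,Y\ot Z}+\tau_{Y\ot Z,X}\zeta_{X,Y\ot Z}+\eta_{X,Y\ot Z}$, substituting the hexagon identities $\tau_{X,Y\ot Z}=(\id_Y\ot\tau_{X,Z})(\tau_{X,Y}\ot\id_Z)$ and $\tau_{Y\ot Z,X}=(\tau_{Y,X}\ot\id_Z)(\id_Y\ot\tau_{Z,X})$ for the symmetry, the two identities for $\zeta_{X,Y\ot Z}$ and $\zeta_{Y\ot Z,X}$ from Lemma~\ref{le:zetatensor}(2), and \eqref{eq:hs2} for $\eta$. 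After simplifying with $\tau_{Z,X}\tau_{X,Z}=\id$ and $\tau_{X,Z}\tau_{Z,X}=\id$, the six resulting summands split into two families: three of them assemble into $(\zeta_{Y,X}\tau_{X,Y}+\tau_{Y,X}\zeta_{X,Y}+\eta_{X,Y})\ot\id_Z=\crmh_{X,Y}\ot\id_Z$, and the remaining three into $(\tau_{Y,X}\ot\id_Z)\bigl(\id_Y\ot(\zeta_{Z,X}\tau_{X,Z}+\tau_{Z,X}\zeta_{X,Z}+\eta_{X,Z})\bigr)(\tau_{X,Y}\ot\id_Z)=(\tau_{Y,X}\ot\id_Z)(\id_Y\ot\crmh_{X,Z})(\tau_{X,Y}\ot\id_Z)$, which is exactly the right-hand side of \eqref{eq:hs2}.

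The main obstacle is thus purely organizational: keeping careful track of which tensor leg each $\tau$ and each $\zeta$ acts on, so that the $\tau\tau=\id$ cancellations land on the correct factors and the six terms regroup as claimed. There is no conceptual difficulty once Lemmas~\ref{le:zetatensor} and~\ref{le:zeta}(2) are available; in particular, the fact that $\crmh_{V,W}$ lands in $\lcrmC{\ff}$ rather than just in $\cC$ — which one might expect to be the crux — has already been isolated and dispatched in Lemma~\ref{le:zeta}(2).
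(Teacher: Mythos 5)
Your proposal is correct and follows essentially the same route as the paper: the preadditive symmetric monoidal structure comes from Lemma~\ref{le:lcrmCmonoidal}, naturality of $\crmh$ from the twisted naturality of $\zeta$ in Lemma~\ref{le:zetatensor}(1) together with naturality of $\tau$ and $\eta$, axiom \eqref{eq:hs4} from $\tau^2=\id$ and \eqref{eq:hs4} for $\eta$, and axiom \eqref{eq:hs2} by exactly the expansion you describe via Lemma~\ref{le:zetatensor}(2), the hexagon identities for $\tau$, and \eqref{eq:hs2} for $\eta$, with the six summands regrouping as you state. Nothing is missing.
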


\begin{proof}
  As noted in Lemma~\ref{le:lcrmCmonoidal}, $\lcrmC{\ff}$ is a preadditive symmetric monoidal category.
  The naturality of $\crmh$ follows directly from Lemma~\ref{le:zetatensor}(1) and the naturality of $\tau $ and $\eta$.
  Thus it remains to verify Equations~\eqref{eq:hs2} and \eqref{eq:hs4} for $\crmh$.
  
  Equation~$\tau \crmh =\crmh \tau$ follows directly from the definition of $\crmh$, since $\tau^2=\id $ and $\tau \eta =\eta \tau$.
  
  We now prove Equation~\eqref{eq:hs2} for $\crmh $.
  Let $X,Y,Z\in \lcrmC{\ff}$. Then
  \begin{align*}
    \crmh_{X,Y\ot Z}&=
    \zeta _{Y\ot Z,X}\tau _{X,Y\ot Z}
    +\tau _{Y\ot Z,X}\zeta_{X,Y\ot Z}
    +\eta_{X,Y\ot Z}.\\
    \intertext{Then Lemma~\ref{le:zetatensor}(2), the braiding axioms for $\tau $, and Equation~\eqref{eq:hs2} for $\eta$ imply that}
    \crmh_{X,Y\ot Z}&=\big( (\zeta \ot \id)(\id \ot \tau )+(\tau \ot \id)(\id \ot \zeta)\big)(\id \ot \tau )(\tau \ot \id)\\
    &\qquad +(\tau \ot \id)(\id \ot \tau)\big(
    (\id \ot \tau)(\zeta \ot \id)
    +(\id \ot \zeta)(\tau \ot \id)\big)\\
    &\qquad +\eta_{X,Y}\ot \id_Z+
    (\tau _{Y,X}\ot \id_Z)(\id _Y\ot \eta_{X,Z})(\tau _{X,Y}\ot \id_Z)\\
    &=\zeta \tau \ot \id +(\tau \ot \id)(\id \ot \zeta \tau )(\tau \ot \id)\\
    &\qquad +\tau \zeta \ot \id 
    +(\tau \ot \id)(\id\ot \tau \zeta )(\tau \ot \id)\\
    &\qquad +\eta_{X,Y}\ot \id_Z+
    (\tau _{Y,X}\ot \id_Z)(\id _Y\ot \eta_{X,Z})(\tau _{X,Y}\ot \id_Z)\\
    &=\crmh_{X,Y}\ot \id_Z+(\tau _{Y,X}\ot \id_Z)(\id _Y\ot \crmh_{X,Z})
    (\tau _{X,Y}\ot \id _Z)
 \end{align*}
 and hence Equation~\eqref{eq:hs2} is fulfilled for $\crmh$.
 Thus the proof of the proposition is completed.
\end{proof}

\begin{rem}
  Note that $\crmh$ in Proposition~\ref{pr:cmfriendly} is in general non-zero, even if $\eta=0$.
\end{rem}

\section{Bosonization of Lie bialgebras in
Cartier categories}
\label{section:bosonization}

In this section let $\cC=(\cC,\eta)$ be a
Cartier category.
We are going to use
semidirect sum Lie algebras and semidirect sum Lie coalgebras, see \eqref{eq:beta} and \eqref{eq:delta} for the corresponding bracket and cobracket, to define bisum Lie bialgebras in $\cC$.
To do so, we will use the infinitesimal braiding of crossed modules from Proposition~\ref{pr:cmfriendly}.
Recall from Proposition~\ref{pr:cmfriendly}, that for each Lie bialgebra $\ff$ in $\cC$, $(\lcrmC{\ff},\crmh)$ is a Cartier category.

\begin{thm} \label{thm:kerpicrossed}
  Let $\pi:\fg\to \ff$, $\gamma:\ff\to \fg$ be Lie bialgebra morphisms between two Lie bialgebras $(\ff,\beta_\ff,\delta_\ff)$ and $(\fg,\beta_\fg,\delta_\fg)$ in $\cC$ such that $\pi \gamma=\id_\ff$. Assume that $\pi$ has a kernel
  $\kappa :\fk\to \fg$ in $\cC$.
  Let $\vartheta :\fg \to \fk $ be the morphism with $\kappa \vartheta =\id_\fg -\gamma \pi $.
  \begin{itemize}
      \item[(a)] The object $\fg\in \cC$ together with the morphisms $\vartheta:\fg \to \fk$, $\pi:\fg \to \ff$, and $\kappa:\fk\to \fg$, $\gamma:\ff\to \fg$ is a biproduct of $\fk $ and $\ff $ in $\cC$.
      \[
\begin{tikzcd}
  \fk & & \fk \\
  & \fg & \\
  \ff & & \ff
  \arrow["\kappa "', from=1-1, to=2-2]
  \arrow["\gamma", from=3-1, to=2-2]
  \arrow["\vartheta"', from=2-2, to=1-3]
  \arrow["\pi", from=2-2, to=3-3]
  \arrow["\id", from=1-1, to=1-3]
  \arrow["\id", from=3-1, to=3-3]
\end{tikzcd}
\]
      \item[(b)]
  The object $\fk \in \cC$ is a left crossed module over $\ff$ in $\cC$, $\fk \in \lcrmC{\ff}$, via $\ff$-action $\alpha_{\fk}$ and $\ff$-coaction $\lambda_{\fk}$, where
  $$ \alpha_\fk=\vartheta \beta_\fg(\gamma \ot \kappa ),\quad
  \lambda_\fk=(\pi \ot \vartheta )\delta_\fg \kappa .
  $$
  Moreover $\fk$ is a Lie bialgebra in $(\lcrmC{\ff},\crmh)$ with bracket $\beta_\fk$ and cobracket $\delta_\fk$, where
  $$ \beta_\fk = \vartheta \beta_\fg (\kappa \ot \kappa),\quad
  \delta_\fk =(\vartheta \ot \vartheta)\delta_\fg \kappa . $$ 

  \item[(c)]
  The Lie algebra $(\fg,\beta)$ is the semidirect sum of $(\fk,\beta_\fk)$ and $(\ff,\beta_\ff)$.
  The Lie coalgebra $(\fg,\delta)$ is the semidirect sum of $(\fk,\delta_\fk)$ and $(\ff,\delta_\ff)$.
    
  \end{itemize}
\end{thm}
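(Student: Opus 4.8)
The plan is to verify each of the three assertions (a), (b), (c) separately, using the splitting hypothesis $\pi\gamma=\id_\ff$ and the kernel $\kappa$ as the only input.

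\emph{Part (a).} First I would establish the biproduct identities among $\kappa,\gamma,\vartheta,\pi$. From $\pi\gamma=\id_\ff$ the morphism $e:=\id_\fg-\gamma\pi$ is idempotent and satisfies $\pi e=0$, so it factors through $\kappa$, which is exactly the definition of $\vartheta$ with $\kappa\vartheta=e$. The relations to check are $\pi\kappa=0$, $\vartheta\gamma=0$, $\vartheta\kappa=\id_\fk$, and $\kappa\vartheta+\gamma\pi=\id_\fg$. The last is $e+\gamma\pi=\id_\fg$ by definition; $\pi\kappa=0$ is the kernel property; $\pi\gamma=\id_\ff$ is the hypothesis. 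For $\vartheta\gamma=0$: apply $\kappa$, since $\kappa\vartheta\gamma=e\gamma=\gamma-\gamma\pi\gamma=0$ and $\kappa$ is monic. For $\vartheta\kappa=\id_\fk$: apply $\kappa$, since $\kappa\vartheta\kappa=e\kappa=\kappa-\gamma\pi\kappa=\kappa$ and $\kappa$ is monic. These four identities together with the existence of $\fk$ are precisely the statement that $\fg$ is a biproduct of $\fk$ and $\ff$.

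\emph{Part (b).} I would first check that $(\fk,\alpha_\fk,\lambda_\fk)\in\lcrmC{\ff}$. The Lie-module axiom for $\alpha_\fk$ follows by transporting the Jacobi identity for $\beta_\fg$ along $\gamma$ and $\kappa$, using that $\gamma$ is a Lie algebra map (so $\beta_\fg(\gamma\ot\gamma)=\gamma\beta_\ff$), that $\vartheta\beta_\fg(\gamma\ot-)$ is a Lie-module structure on $\fg$ restricted to the image of $\kappa$, and the biproduct identities from (a). Dually, the Lie-comodule axiom for $\lambda_\fk$ follows from the co-Jacobi identity for $\delta_\fg$ and the fact that $\pi$ is a Lie coalgebra map. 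The crossed-module compatibility \eqref{eq:lcrm} for $(\fk,\alpha_\fk,\lambda_\fk)$ is the heart of this part: one expands both sides after precomposing with $\gamma\ot\kappa$ and postcomposing appropriately, and reduces to the Lie bialgebra compatibility \eqref{eq:bialgebra_compatibility2} for $\fg$, using $\kappa\vartheta=\id_\fg-\gamma\pi$, $\pi\kappa=0$, $\pi\gamma=\id_\ff$, and the naturality of $\tau$ and $\eta$ together with the fact that $\pi$ and $\gamma$ intertwine $\eta_{\fg,-}$ with $\eta_{\ff,-}$. Finally one checks that $(\fk,\beta_\fk)$ is a Lie algebra and $(\fk,\delta_\fk)$ a Lie coalgebra in $\lcrmC{\ff}$ — antisymmetry and Jacobi transport directly from $\fg$ — and that $\beta_\fk,\delta_\fk$ are morphisms in $\lcrmC{\ff}$, i.e.\ compatible with $\alpha_\fk$ and $\lambda_\fk$; again this is a transport of the $\fg$-level identities. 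The Lie bialgebra compatibility \eqref{eq:bialgebra_compatibility} for $(\fk,\beta_\fk,\delta_\fk)$ \emph{with infinitesimal braiding $\crmh_{\fk,\fk}$} is where the specific form of $\crmh$ from Lemma~\ref{le:zeta}(2) enters: after transporting \eqref{eq:bialgebra_compatibility} for $\fg$ along $\vartheta\ot\vartheta$ and $\kappa\ot\kappa$, the difference between the naive restriction and the correct $\fk$-level identity is exactly the term built from $\zeta$, which is precisely $\crmh_{\fk,\fk}-\eta_{\fk,\fk}$ by definition; this requires careful bookkeeping with the four-term cross-terms coming from $\kappa\vartheta=\id-\gamma\pi$.

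\emph{Part (c).} Given (a) and (b), this is largely formal. The biproduct from (a) lets us identify $\fg$ with $\fk\oplus\ff$, and we must check that $\beta$ (resp.\ $\delta$) satisfies the four defining equations \eqref{eq:beta} (resp.\ \eqref{eq:delta}) for the semidirect sum with respect to $\iota_\ff=\gamma,\ \iota_\fk=\kappa,\ \pi_\ff=\pi,\ \pi_\fk=\vartheta$. Concretely: $\beta(\gamma\ot\gamma)=\gamma\beta_\ff$ because $\gamma$ is a Lie algebra map; $\vartheta\beta(\kappa\ot\kappa)=\beta_\fk$ by definition of $\beta_\fk$ and the fact that $\pi\beta(\kappa\ot\kappa)=\beta_\ff(\pi\kappa\ot\pi\kappa)=0$, so $\beta(\kappa\ot\kappa)=\kappa\vartheta\beta(\kappa\ot\kappa)=\kappa\beta_\fk$; $\vartheta\beta(\gamma\ot\kappa)=\alpha_\fk$ by definition and $\pi\beta(\gamma\ot\kappa)=\beta_\ff(\id\ot\pi\kappa)=0$; and the mixed term $\beta(\kappa\ot\gamma)=-\kappa\alpha_\fk\tau_{\fk,\ff}$ follows from antisymmetry of $\beta_\fg$ together with the previous identity and the compatibility of $\tau$ with $\kappa,\gamma$. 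The dual statements for $\delta$ follow by the same argument applied to $\delta_\fg$, using that $\pi$ and $\gamma$ are Lie coalgebra maps. Uniqueness of the semidirect-sum bracket and cobracket then forces $\beta$ and $\delta$ to be the stated ones.

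\emph{Main obstacle.} The routine transports of Jacobi, co-Jacobi, antisymmetry, and module/comodule axioms are all mechanical. The genuinely delicate point is the crossed-module identity \eqref{eq:lcrm} for $(\fk,\alpha_\fk,\lambda_\fk)$ and, even more, the bialgebra compatibility for $(\fk,\beta_\fk,\delta_\fk)$ relative to $\crmh$: in both cases the substitution $\id_\fg=\kappa\vartheta+\gamma\pi$ produces many cross-terms, and one must see that exactly those involving $\gamma\pi$ reorganize into the $\zeta$-expression defining $\crmh$ (plus the ambient $\eta$), with the remaining terms matching \eqref{eq:bialgebra_compatibility}. Managing these cross-terms, and confirming that $\pi$ and $\gamma$ intertwine the infinitesimal braidings correctly, is where the real work lies; the rest is bookkeeping with the biproduct relations from (a) and naturality of $\tau$ and $\eta$.
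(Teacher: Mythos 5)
Your proposal is correct and follows essentially the same route as the paper: part (a) via the idempotent $\id_\fg-\gamma\pi$ and monicity of $\kappa$, part (b) by transporting the Jacobi/co-Jacobi and bialgebra identities of $\fg$ along $\vartheta,\pi$ and $\gamma,\kappa$, with the key insertion of $\kappa\vartheta+\gamma\pi=\id_\fg$ producing exactly the $\zeta$-term that combines with $\eta_{\fk,\fk}$ into $(\tau-\id)\crmh_{\fk,\fk}$, and part (c) by checking the defining equations \eqref{eq:beta} and \eqref{eq:delta}. Although stated as a plan rather than a full computation, it identifies the same decisive steps and delicate points as the paper's proof, so no gap to report.
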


\begin{rem}
 In Theorem~\ref{thm:kerpicrossed},
 a morphism is a kernel of $\pi $ if and only if it is a kernel of $\gamma \pi $, since $\pi \gamma =\id_\ff$ by assumption. Moreover, $\gamma\pi $ is an idempotent morphism. Therefore, if $\cC$ is Karoubian (pseudo-abelian), then the assumption in Theorem~\ref{thm:kerpicrossed} on the existence of a kernel of $\pi $ is always fulfilled. We thank the referee for pointing out this interesting detail.
\end{rem}

\begin{proof}
  (a) Existence and uniqueness of $\vartheta $ follow from the equation
  $$ \pi(\id _\fg -\gamma \pi)=0 $$
  and the choice of $\kappa $. Moreover, $\kappa \vartheta \kappa =\kappa $ since $\pi \kappa=0$. Hence
  $\vartheta \kappa =\id_\fk$
  since $\kappa $ is a monomorphism. Similarly,
  $\vartheta \gamma=0$
  since
  $\kappa \vartheta \gamma=(\id -\gamma \pi)\gamma=0$. The equations $\pi\gamma=\id_{\fh}$ and $\pi \kappa=0$ are clear.

  (b) Composing the Jacobi identity of $(\fg ,\beta _\fg)$
  from the left with $\vartheta $ and from the
  right with $\gamma \ot \gamma \ot \kappa $ implies that the pair $(\fk,\alpha_\fk)$ is a left Lie module over $\ff$ in $\cC$:
  \begin{align*}
      \alpha_\fk(\beta_\ff \ot \id_\fk)&=
      \vartheta \beta_\fg (\gamma \ot \kappa )(\beta _\ff \ot \id_\fk)\\
      &=\vartheta \beta_\fg (\beta _\fg \ot \id_\fg)(\gamma \ot \gamma \ot \kappa )\\
      &=\vartheta \beta_\fg (\id_\fg \ot \beta _\fg)(\id -\tau_{12})(\gamma \ot \gamma \ot \kappa )\\
      &=\vartheta \beta_\fg (\id_\fg \ot (\id _\fg -\gamma \pi)\beta _\fg)(\gamma \ot \gamma \ot \kappa )(\id -\tau_{12})\\
      &=\vartheta \beta_\fg (\gamma \ot \kappa \vartheta \beta _\fg)(\id_\ff \ot \gamma \ot \kappa )(\id -\tau_{12})\\
      &=\alpha_\fk (\id_\ff \ot \alpha_\fk )(\id -\tau_{12}).
  \end{align*}
   Similarly, $(\fk ,\lambda_\fk)$ is a left Lie comodule over $\ff $ in $\cC$.
   The compatibility condition~\eqref{eq:lcrm} between $\alpha_\fk$ and $\lambda_\fk$ follows from the Lie bialgebra axiom \eqref{eq:bialgebra_compatibility2} for $\fg$ composed with $\pi \ot \vartheta$ from the left and with $\gamma \ot \kappa $ from the right. In this calculation the naturality of $\eta $ has to be used. We conclude that $(\fk,\alpha_\fk,\lambda_\fk)\in \lcrmC{\ff}$.

   The Jacobi identity for $\beta _\fg$ in the form
   $$ \beta_\fg(\id _\fg \ot \beta_\fg)=
   \beta_\fg(\beta_\fg \ot \id_\fg)(\id -\tau_{23}) $$
   multiplied from the left with $\vartheta $ and from the right with $\gamma \ot \kappa \ot \kappa$, using the antisymmetry of $\beta_\fg$ and equations between $\gamma,\pi,\kappa$, and $\vartheta$ implies that
   $\beta_\fk$ is a morphism in $\lmC{\ff}$.
   By multiplying the Lie bialgebra axiom \eqref{eq:bialgebra_compatibility2} for $\fg$ from the left with $\pi \ot \vartheta$ and from the right with $\kappa \ot \kappa $ we conclude that $\beta_\fk $ is a morphism in $\lcmC{\ff}$.
   Similarly, $\delta_\fk $ is a morphism in $\lcrmC{\ff}$. 

   The antisymmetry of $\beta_\fk$ follows from the antisymmetry of $\beta_\fg$ and the naturality of the symmetry $\tau$:
   $$ \beta_\fk \tau
   =\vartheta \beta_\fg (\kappa \ot \kappa)\tau
   =\vartheta \beta_\fg \tau (\kappa \ot \kappa)
   =-\vartheta \beta_\fg (\kappa \ot \kappa)
   =-\beta_\fk.
   $$
   The Jacobi identity for $\beta_\fk$ can be concluded from the Jacobi identity of $\beta_\fg$, the defining equation of $\vartheta$, and from the equation
   $\pi \beta_\fg (\kappa \ot \kappa)=0$.
   Similarly, $\delta_\fk $ is co-antisymmetric and satisfies the co-Jacobi identity.  Finally, the defining equation of $\vartheta$ and the Lie bialgebra axiom
   \eqref{eq:bialgebra_compatibility} for $\fg$ imply that
   \begin{align*}
     \delta_\fk \beta_\fk &=
     (\vartheta \ot \vartheta)\delta_\fg \kappa \vartheta \beta_\fg (\kappa \ot \kappa)\\
     &=
     (\vartheta \ot \vartheta)\delta_\fg  \beta_\fg (\kappa \ot \kappa)\\     
     &=
     (\id -\tau )(\vartheta \ot \vartheta)(\beta _\fg \ot \id)(\id \ot \tau)(\delta_\fg \ot \id)(\kappa \ot \kappa)(\id -\tau )\\
     &\qquad
     +(\tau-\id )(\vartheta \ot \vartheta)\eta (\kappa \ot \kappa).\\
     \intertext{Now the defining equation of $\vartheta$ is plugged in and the naturality of $\eta$ is used to conclude that}
     \delta_\fk \beta_\fk &=
     (\id -\tau )(\vartheta \ot \vartheta)(\beta _\fg \ot \id)\big((\kappa \vartheta+\gamma \pi )\ot \id_\fg^{\ot 2}\big)(\id \ot \tau)(\delta_\fg \ot \id)(\kappa \ot \kappa)(\id -\tau )\\
     &\qquad
     +(\tau-\id )\eta_{\fk,\fk}\\
     &=
     (\id -\tau )\big((\beta _\fk\ot \id)(\id \ot \tau)(\delta_\fk \ot \id )+(\alpha_\fk\ot \id)(\id \ot \tau)
     (\lambda_\fk\ot \id)\big)(\id -\tau )\\
     &\qquad
     +(\tau-\id )\eta_{\fk,\fk}.\\
     \intertext{Now the definition of $\crmh$ implies that}
     \delta_\fk \beta_\fk &=
     (\id -\tau )(\beta _\fk\ot \id)
     (\id \ot \tau)(\delta_\fk \ot \id )(\id -\tau )+(\tau-\id )\crmh_{\fk,\fk}.
   \end{align*}
   Thus $\fk $ satisfies Equation~\eqref{eq:bialgebra_compatibility}.

   (c) This follows from the definitions of the brackets, cobrackets, actions and coactions and the biproduct structure morphisms.
   \end{proof}

Now we prove a converse of Theorem~\ref{thm:kerpicrossed}.

\begin{thm}
\label{thm:doublesum}
    Let $\ff$ be a Lie bialgebra in $\cC$ and 
    $\fk$ be a Lie bialgebra in $(\lcrmC{\ff},\crmh)$. Assume that the biproduct $\fk \oplus \ff$ exists in $\cC$. Then 
    $(\fk\oplus\ff,\beta,\delta)$ is a Lie bialgebra in $\cC$, where $(\fk\oplus\ff,\beta)$ is the semidirect sum Lie algebra in $\cC$ and $(\fk\oplus\ff,\delta)$ is the semidirect sum Lie coalgebra in $\cC$.
    This Lie bialgebra is called the \textbf{bisum Lie bialgebra of $\fk $ and $\ff$ in $\cC$}.
\end{thm}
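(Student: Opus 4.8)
The strategy is to verify the single Lie bialgebra compatibility axiom \eqref{eq:bialgebra_compatibility} for the pair $(\beta,\delta)$ on $\fk\oplus\ff$, since the fact that $(\fk\oplus\ff,\beta)$ is a Lie algebra and $(\fk\oplus\ff,\delta)$ is a Lie coalgebra in $\cC$ is already supplied by the semidirect-sum constructions of Sections~\ref{section:lie} and~\ref{section:lieco}. Because $\beta$ and $\delta$ are determined by their components along the biproduct structure morphisms $\iota_\ff,\iota_\fk,\pi_\ff,\pi_\fk$ (see \eqref{eq:beta} and \eqref{eq:delta}), it suffices to check \eqref{eq:bialgebra_compatibility} after precomposing with $\iota_x\ot\iota_y$ and postcomposing with $\pi_u\ot\pi_v$ for all choices $x,y,u,v\in\{\fk,\ff\}$. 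This splits the verification into a bounded list of cases; each case reduces, via the defining relations of the semidirect sums, to an identity among $\beta_\ff,\delta_\ff,\beta_\fk,\delta_\fk,\alpha_\fk,\lambda_\fk$ and the braidings $\tau$, $\eta$.

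First I would dispose of the $(\ff,\ff)$-block: precomposing with $\iota_\ff\ot\iota_\ff$ and using \eqref{eq:deltaiotaf} together with $\beta(\iota_\ff\ot\iota_\ff)=\iota_\ff\beta_\ff$, the identity collapses to \eqref{eq:bialgebra_compatibility} for $\ff$ itself (the $\eta$-term matches because $\eta_{\ff,\ff}$ is the infinitesimal braiding of $\cC$ and the embedding $\ff\hookrightarrow\fk\oplus\ff$ is compatible with $\eta$ by naturality). Next the $(\fk,\fk)$-block: here one uses \eqref{eq:deltaiotag} and the analogous formula for $\beta$ on $\iota_\fk\ot\iota_\fk$; after expanding, the purely $\fk$-internal part is exactly \eqref{eq:bialgebra_compatibility} for $\fk$ in $(\lcrmC{\ff},\crmh)$, and here is the crucial point: the infinitesimal braiding on the right-hand side for $\fk$ is $\crmh_{\fk,\fk}$, not $\eta_{\fk,\fk}$, and one must check that when $\crmh_{\fk,\fk}=\zeta_{\fk,\fk}\tau+\tau\zeta_{\fk,\fk}+\eta_{\fk,\fk}$ is unwound via the definition of $\zeta$ (Lemma~\ref{le:zeta}), the two $\zeta$-terms are precisely cancelled/produced by the cross-terms coming from the $\lambda_\fk$-twist in \eqref{eq:deltaiotag} and the $\alpha_\fk$-twist in the bracket on $\iota_\fk\ot\iota_\fk$. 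This is the heart of the theorem and the step most likely to require care with signs and leg-notation bookkeeping. The remaining mixed blocks $(\ff,\fk)$ and $(\fk,\ff)$ encode precisely the statement that $\fk\in\lcrmC{\ff}$, i.e.\ the crossed-module axiom \eqref{eq:lcrm} together with the module and comodule axioms; these follow by combining $\beta(\iota_\ff\ot\iota_\fk)=\iota_\fk\alpha_\fk$, $\beta(\iota_\fk\ot\iota_\ff)=-\iota_\fk\alpha_\fk\tau$, and the dual relations from \eqref{eq:delta}, again using naturality of $\eta$ to move it past the projections.

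The main obstacle I anticipate is the $(\fk,\fk)$-block reconciliation: one is comparing the axiom satisfied by $\fk$ as a Lie bialgebra \emph{inside} the category $(\lcrmC{\ff},\crmh)$ — whose braiding is still $\tau$ but whose infinitesimal braiding is the twisted $\crmh$ — against the axiom that must hold in $(\cC,\eta)$ after applying the semidirect-sum formulas, which introduce $\tau$-conjugated copies of $\alpha_\fk$ and $\lambda_\fk$. Lemma~\ref{le:zeta}(1) was designed exactly for this: its displayed identities relate $\zeta$ composed with the diagonal (co)action to $\hat\alpha$, $\hat\lambda$ and $\eta$-correction terms, and I expect that feeding \eqref{eq:deltaiotag} into $\delta\beta$ and repeatedly applying those identities, together with the co-Jacobi and module axioms already used in Section~\ref{section:lieco}, will make the $\zeta$-terms telescope into exactly $\crmh_{\fk,\fk}$. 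Once all four blocks are checked, \eqref{eq:bialgebra_compatibility} holds on $\fk\oplus\ff$ because a morphism into (resp.\ out of) a biproduct is determined by its components, completing the proof.
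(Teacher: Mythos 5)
Your plan is correct and follows essentially the same route as the paper: a blockwise check of \eqref{eq:bialgebra_compatibility} through the biproduct morphisms, with the $(\fk,\fk)$-block producing the $\zeta_{\fk,\fk}$ cross-terms that combine with $\eta_{\fk,\fk}$ into $(\tau-\id)\crmh_{\fk,\fk}$, and the mixed blocks reducing to the crossed-module axiom together with the $\ff$-linearity and $\ff$-colinearity of $\beta_\fk$ and $\delta_\fk$. The paper only streamlines the bookkeeping: invariance of both sides of \eqref{eq:bialgebra_compatibility} under composition with $-\tau$ cuts the verification down to four compositions, and the $(\fk,\fk)$-reconciliation is done directly from the definitions of $\zeta$ and $\crmh$ rather than via the identities of Lemma~\ref{le:zeta}(1).
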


Note that the bisum Lie bialgebra in Theorem~\ref{thm:doublesum} satisfies the assumptions in Theorem~\ref{thm:kerpicrossed}
with $\fg=\fk\oplus \ff$ and with $\pi:\fg\to \ff$, $\gamma: \ff\to \fg$ the corresponding biproduct morphisms.

\begin{proof} It only remains to prove that $\beta $ and $\delta $ satisfy the Lie bialgebra axiom \eqref{eq:bialgebra_compatibility}. Since $\beta \tau=-\beta $ and $(\id-\tau)\tau=-(\id-\tau)$,
it suffices to prove that \eqref{eq:bialgebra_compatibility} is satisfied on $(\fk\ot \fk)\oplus (\ff \ot \fk) \oplus (\ff \ot \ff)$.

Since
$\delta \iota_\ff=(\iota_\ff\ot \iota_\ff)\delta_\ff$ by
\eqref{eq:deltaiotaf} and
$\beta(\iota_\ff\ot \iota _\ff)=\iota_\ff \beta_\ff$
by \eqref{eq:beta}, and since $(\ff,\beta_\ff,\delta_\ff)$ is a Lie bialgebra in $\cC$,
\eqref{eq:bialgebra_compatibility} is clearly satisfied on $\ff\ot \ff$.
Moreover, on both sides of \eqref{eq:bialgebra_compatibility}
the expression is invariant under composition with $-\tau $ from the left or from the right. Therefore it remains to prove that \eqref{eq:bialgebra_compatibility} or \eqref{eq:bialgebra_compatibility2} is satisfied after composing from the left with $\pi_\ff \ot \pi_\fk $ or with $\pi_\fk\ot \pi_\fk $, and from the right with $\iota_\ff \ot \iota_\fk $ or with $\iota_\fk \ot \iota_\fk $.

Let $\lma{}\fk$ and $\lambda_\fk$ denote the left Lie action on $\fk$ by $\ff$ and the left Lie coaction on $\fk$ by $\ff$, respectively.
Then we obtain from
\eqref{eq:beta} and \eqref{eq:delta} that
\begin{align*}
 (\pi_\fk \ot \pi_\fk)\delta \beta (\iota_\fk \ot \iota_\fk)
&=\delta _\fk\,\pi_\fk \,\iota_\fk \,\beta_\fk 
=\delta_\fk \,\beta_\fk.
\end{align*}
On the other hand, using further \eqref{eq:beta} and \eqref{eq:delta}
and the naturality of $\eta$,
we obtain that
\begin{align*}
(\pi_\fk \ot \pi_\fk)&(\id -\tau )(\beta \ot \id)
(\id \ot \tau)(\delta\ot \id )(\id -\tau)
(\iota_\fk \ot \iota_\fk)\\
&\qquad
+(\pi_\fk \ot \pi_\fk)(\tau-\id)\eta_{\fg,\fg}(\iota_\fk \ot \iota_\fk)
\\
&=(\id -\tau)(\pi_\fk \beta \ot \pi_\fk)
( (\iota_\fk \pi_\fk +\iota_\ff \pi_\ff ) \ot \tau)(\delta\iota_\fk \ot \iota_\fk)(\id -\tau)
+(\tau-\id)\eta_{\fk,\fk}
\\
&=(\id -\tau)(\beta_\fk \ot \id_\fk)
( \id \ot \tau)(\delta_\fk \ot \id_\fk)(\id -\tau)\\
&\qquad +
(\id -\tau)(\lma{}{\fk}\ot \id_\fk)
( \id _\ff \ot \tau_{\fk,\fk})(\lambda _\fk \ot \id_\fk)(\id -\tau)
+(\tau-\id)\eta_{\fk,\fk}.
\intertext{Since
$(\lma{}{\fk}\ot \id_\fk)
( \id _\ff \ot \tau_{\fk,\fk})(\lambda _\fk \ot \id_\fk)=\zeta_{\fk,\fk}$,
it follows 
from the definition of $\crmh$
and $\tau^2=\id$
that the last expression is equal to
}
&\quad (\id -\tau)(\beta_\fk \ot \id_\fk)
( \id \ot \tau)(\delta_\fk \ot \id_\fk)(\id -\tau) +(\tau _{\fk,\fk}-\id_{\fk\ot \fk})\crmh _{\fk,\fk}.
\end{align*}
Since $\fk $ is a Lie bialgebra in $(\lcrmC{\ff},\crmh)$,
\eqref{eq:bialgebra_compatibility} holds when composing from the left with $\pi_\fk \ot \pi_\fk$ and from the right with $\iota_\fk \ot \iota_\fk$.

Next we obtain from
\eqref{eq:beta} and \eqref{eq:delta} that
\begin{align*}
 (\pi_\ff \ot \pi_\fk)\delta \beta (\iota_\fk \ot \iota_\fk)
&=\lambda _\fk\,\pi_\fk \,\iota_\fk \,\beta_\fk 
=\lambda_\fk \,\beta_\fk=(\id \ot \beta_\fk)\lambda_{\fk \ot \fk},
\end{align*}
since $\beta _\fk:\fk\ot \fk \to \fk$
is a Lie comodule morphism by assumption.

On the other hand,
\begin{align}
\notag
(\pi_\ff \ot \pi_\fk)&(\id -\tau )
(\beta \ot \id)(\id \ot \tau) (\delta \ot \id )
(\id -\tau )(\iota_\fk \ot \iota_\fk)\\
\notag
&\qquad
+(\pi_\ff \ot \pi_\fk)(\tau -\id )\eta_{\fg,\fg}(\iota_\fk \ot \iota_\fk)\\
\notag
&=(\pi_\ff \ot \pi_\fk)(\id -\tau)(\beta \ot \id)
( \id \ot \tau)(\delta\iota_\fk \ot \iota_\fk)(\id -\tau)\\
\intertext{since $\eta $ is a natural transformation and $\pi_\ff \iota_\fk=0$. Since $\pi_\ff\beta=\beta_\ff (\pi_\ff\ot \pi_\ff)$ and since
$\tau (\beta \ot \id)(\id \ot \tau )=(\id \ot \beta )(\tau \ot \id )$, the latter expression can be rewritten as}
\notag
&=(\beta_\ff (\pi_\ff \ot \pi_\ff)\ot \pi_\fk)
(\id \ot \tau )(\delta\iota_\fk \ot \iota_\fk)(\id -\tau)\\
\notag
&\qquad
-(\pi_\ff \ot \pi_\fk)(\id \ot \beta )(\tau \delta \iota_\fk 
\ot \iota_\fk )(\id -\tau )
\\
\label{eq:bos1}
&=(\pi_\ff \ot \pi_\fk)(\id \ot \beta )(\delta\iota_\fk \ot \iota_\fk)(\id -\tau),
\end{align}
where the last equation follows from $\pi_\ff \iota_\fk=0$ and from $\tau \delta=-\delta$.
For the next reformulation we conclude first from \eqref{eq:delta} that $(\pi _\ff \ot \id )\delta \iota_\fk =(\pi_\ff \ot \iota_\fk \pi_\fk)\delta \iota_\fk $.
Hence the expression in \eqref{eq:bos1} is equal to
$$ (\id \ot \pi_\fk\beta )
(\id \ot \iota_\fk \ot \iota_\fk ) ((\pi_\ff \ot \pi_\fk )\delta\iota_\fk \ot \id)(\id -\tau)
=(\id _\ff\ot \beta_\fk )(\lambda_\fk \ot \id _\fk )(\id -\tau ).
$$
Since $\beta_\fk \tau=-\beta_\fk $, the latter expression is equal to $(\id \ot \beta_\fk )\lambda_{\fk \ot \fk }$.
Therefore  \eqref{eq:bialgebra_compatibility} holds when composing from the left with $\pi_\ff \ot \pi_\fk$ and from the right with $\iota_\fk \ot \iota_\fk$.

Very similarly we conclude that \eqref{eq:bialgebra_compatibility} holds
when composing from the left with $\pi_\fk \ot \pi_\fk$ and from the right with $\iota_\ff \ot \iota_\fk$.

Finally, Equations~\eqref{eq:beta} and \eqref{eq:delta} imply that
\begin{align*}
 (\pi_\ff \ot \pi_\fk)\delta \beta (\iota_\ff \ot \iota_\fk)
&= (\pi_\ff \ot \pi_\fk)\delta \iota_\fk \pi_\fk
\beta (\iota_\ff \ot \iota_\fk)=\lambda _\fk\,\lma{}{\fk }.
\end{align*}
On the other hand, the expression
\begin{align*}
 (\pi_\ff \ot \pi_\fk)&(\id -\tau )
(\beta \ot \id)(\id \ot \tau) (\delta \ot \id )
(\id -\tau )(\iota_\ff \ot \iota_\fk)\\
&\qquad +(\pi_\ff \ot \pi_\fk)
(\tau -\id )\eta_{\fg,\fg}(\iota_\ff \ot \iota_\fk )
\end{align*}
is the sum of the following five terms:
\begin{align*}
 &(\pi_\ff \ot \pi_\fk)
(\beta \ot \id)(\id \ot \tau) (\delta \ot \id )
(\iota_\ff \ot \iota_\fk)\\
&\quad =(\pi_{\ff }\beta \ot \pi_{\fk })(\iota_\ff \ot \iota_\fk \ot \iota _\ff)(\id _\ff\ot \tau_{\ff,\fk})(\delta_\ff \ot \id_\fk)=0,\\
 &(\pi_\ff \ot \pi_\fk)
\tau (\beta \ot \id)(\id \ot \tau) (\delta \ot \id )\tau (\iota_\ff \ot \iota_\fk)\\
 &\quad =(\pi_\ff \ot \pi_\fk)
(\id \ot \beta )(\tau \ot \id)(\id \ot \delta) (\iota_\ff \ot \iota_\fk)\\
&\quad = (\id_\ff \ot \alpha_\fk)(\tau_{\ff,\ff}\ot \id_\fk)(\id _\ff\ot \lambda_\fk),\\
 &-(\pi_\ff \ot \pi_\fk)
\tau (\beta \ot \id)(\id \ot \tau) 
(\delta \ot \id )(\iota_\ff \ot \iota_\fk)\\
&\quad =(\pi_\ff \ot \pi_\fk )(\id \ot \beta )
(\delta  \ot \id )(\iota_{\ff }\ot \iota_\fk)\\
&\quad =
(\id _\ff \ot \alpha_\fk ) (\delta _\ff \ot \id _\fk),\\
&-(\pi_\ff \ot \pi_\fk)
(\beta \ot \id)(\id \ot \tau) (\delta \ot \id )
\tau (\iota_\ff \ot \iota_\fk)=(\beta_\ff \ot \id_\fk )
(\id _\ff \ot \lambda_\fk),\\
&(\pi_\ff \ot \pi_\fk)
(\tau -\id )\eta_{\fg,\fg}(\iota_\ff \ot \iota_\fk )
=-\eta_{\fk,\fk}.
\end{align*}
Thus, by the crossed module axiom for $\fk$, \eqref{eq:bialgebra_compatibility} holds
when composing from the left with $\pi_\ff \ot \pi_\fk$ and from the right with $\iota_\ff \ot \iota_\fk$.
This finishes the proof of the theorem.
\end{proof}

\section{Examples}
\label{section:examples}

In this section we present some non-trivial curved Lie bialgebras. Each of these examples are related to certain color vector spaces.

Let $G$ be an abelian group. We omit the symbol for the group operation and write $gh$ for the product of two elements $g,h\in G$, $1$ for the neutral element of $G$, and $g^{-1}$ for the inverse of $g\in G$.
Let $\Bbbk $ be a field, $\Bbbk^\times $ its subgroup of units, and let $\chi :G\times G\to \Bbbk ^\times $ be an antisymmetric bicharacter, also called composition factor. The latter means that
\begin{align}
  \chi(g_1,g_2)\chi(g_2,g_1)=1, \quad
  \chi(g_1g_2,h)=\chi(g_1,h)\chi(g_2,h).
\end{align}
These equations imply in particular that $\chi(g,g)^2=1$ for all $g\in G$. Moreover, $\chi $ defines a group homomorphism $G\to \Bbbk^\times  $ via $g\mapsto \chi(g,g)$.

A $G$-graded vector space is
a vector space $V$ with a direct sum decomposition $V=\bigoplus_{g\in G}V_g$. A morphism between $G$-graded vector spaces $V,W$ is a linear map $f:V\to W$ with $f(V_g)\subset W_g$ for all $g\in G$.
The category of $G$-graded vector spaces over $\Bbbk $ together with an antisymmetric bicharacter $\chi :G\times G\to \Bbbk^\times $
is commonly known as the category of ($(G,\chi)$-)\textbf{color vector spaces}. It is a monoidal category, where
$$ (V\ot W)_g=\bigoplus_{h\in G}V_h\ot W_{h^{-1}g} $$
for all $G$-graded vector spaces $V,W$. Moreover, the presence of the antisymmetric bicharacter allows to attach a non-trivial symmetry to the category:
$$ \tau_{V,W}:V\ot W\to W\ot V,\quad \tau_{V,W}(v\ot w)=\chi(g,h)w\ot v
$$
for all $v\in V_g$, $w\in W_h$.
We view the category of $(G,\chi)$-color vector spaces as a Cartier category with symmetry $\tau $ and with zero infinitesimal braiding, and abbreviate it by $\cC$.

Each object $\ff\in \cC$ is a Lie algebra with zero bracket, called abelian Lie algebra. Similarly, each object $\ff\in \cC$ is a Lie coalgebra with zero cobracket. We then say that $\ff$ is coabelian. Our examples in this section will be (curved) Lie bialgebras in the category of crossed modules (see Proposition~\ref{pr:cmfriendly}) over an abelian coabelian Lie bialgebra.
For the discussion of the Laistrygonian examples the following lemma will be useful.

\begin{lem}
\label{lem:coabcoaction}
Let $\ff$ be a trivially $G$-graded coabelian Lie coalgebra in $\cC$. Let $V\in \cC$ and let $(x_i)_{i\in I}$ be a vector space basis of $V$ consisting of homogeneous elements with respect to the $G$-grading. Then for each family $(f_i)_{i\in I}$ of elements in $\ff$, the map
$$ \lambda:V\to \ff \ot V,\quad
\lambda(x_i)=f_i\ot x_i
$$
defines a left Lie comodule structure over $\ff$ on $V$.
\end{lem}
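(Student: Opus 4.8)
The plan is to verify directly that the proposed map $\lambda:V\to\ff\ot V$ satisfies the defining axiom of a left Lie comodule over $\ff$, namely that
\[
(\delta\ot\id_V)\lambda=(\id-\tau_{12})(\id_\ff\ot\lambda)\lambda.
\]
First I would observe that since $\ff$ is coabelian, $\delta=0$, so the left-hand side vanishes identically, and the task reduces to showing that the right-hand side also vanishes. So the whole proof comes down to computing $(\id_\ff\ot\lambda)\lambda$ on the basis and checking that it is symmetric under $\tau_{12}$, hence killed by $\id-\tau_{12}$.

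Next I would compute on a basis element: $\lambda(x_i)=f_i\ot x_i$, and then $(\id_\ff\ot\lambda)\lambda(x_i)=(\id_\ff\ot\lambda)(f_i\ot x_i)=f_i\ot f_i\ot x_i$. Here I should be careful about the grading: since $\ff$ is \emph{trivially} $G$-graded, every $f_i$ sits in degree $1\in G$, so $\chi(1,1)=1$ (indeed $\chi(1,g)=1$ for all $g$ because $\chi$ is a bicharacter). Therefore applying $\tau_{12}$ to $f_i\ot f_i\ot x_i$ gives $\chi(1,1)\,f_i\ot f_i\ot x_i=f_i\ot f_i\ot x_i$, i.e. the element $f_i\ot f_i\ot x_i$ is fixed by $\tau_{12}$. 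Consequently $(\id-\tau_{12})(f_i\ot f_i\ot x_i)=0$, which matches the vanishing of $(\delta\ot\id)\lambda$. Since this holds on every basis element $x_i$ and all maps involved are linear, the comodule axiom is satisfied.

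I would also note the trivial checks: $\lambda$ is a morphism in $\cC$, i.e. it respects the $G$-grading, because $f_i$ has degree $1$ and hence $f_i\ot x_i$ has the same $G$-degree as $x_i$; and $\lambda$ is $\Bbbk$-linear by construction (it is defined on a basis and extended linearly). These are immediate and require no real argument.

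There is essentially no hard step here; the only thing to get right is the bookkeeping with the bicharacter and the observation that triviality of the grading on $\ff$ forces the relevant value of $\chi$ to be $1$, so that $\tau_{12}$ acts as the identity on the tensor $f_i\ot f_i\ot x_i$. If one wanted, the same computation works verbatim whenever the $f_i$ all lie in a single degree $g$ with $\chi(g,g)=1$; the hypothesis that $\ff$ is trivially graded is the clean sufficient condition used in the statement. I would present the proof as the three-line computation above together with the one-sentence remarks on linearity and grading.
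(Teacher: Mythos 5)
Your proof is correct and follows essentially the same route as the paper's: coabelianness makes $(\delta\ot\id)\lambda$ vanish, triviality of the $G$-grading on $\ff$ forces $\chi(1,1)=1$ so that $f_i\ot f_i\ot x_i$ is fixed by $\tau_{12}$ and killed by $\id-\tau_{12}$, and the degree argument shows $\lambda$ is a morphism in $\cC$. Only your closing aside is slightly off: if the $f_i$ all had a common degree $g\neq 1$ with $\chi(g,g)=1$, the comodule identity computation would still go through, but $\lambda$ would no longer preserve the $G$-grading and hence would fail to be a morphism in $\cC$, so it would not define a Lie comodule in the sense used here.
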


\begin{proof}
  The assumptions imply that $\lambda $ is a morphism in $\cC$, $\delta \ot \id:\ff \ot V\to \ff \ot \ff \ot V$ is zero, and
  $$ ((\id -\tau)\ot \id)(\id \ot \lambda)\lambda=0.
  $$
  This implies the claim.
\end{proof}

The Nichols algebras in \cite{MR4298502} indicate the existence of several finite-dimensional  (curved) Lie bialgebras
in the category of crossed modules over an abelian coabelian Lie bialgebra. We discuss here only two single examples (the Jordan and the super Jordan plane) and a series (the Laistrygonian examples) in detail. Other candidates are the super Laistrygonians as well as the (super) Endymion and Poseidon examples. Our presentation here is aimed to be explicit rather than being efficient. Nonetheless, we strongly believe that by adding more theory an appealing efficient presentation of large classes of examples is possible.

\subsection{The Jordan plane}

We start with a relatively simple example, where the benefit of the combination of so much fairly trivial structure may not be fully convincing at the first moment. Nevertheless, this example will appear later as a subobject in more complicated examples. Therefore it is worthwhile to study it separately.  

Assume that $G=\langle g\rangle \cong (\ndZ,+)$ and $\chi(g,g)=1$. Let $\cC$ be the category of $(G,\chi)$-color vector spaces (with zero infinitesimal braiding).
Let $\ff=\Bbbk s$ be a one-dimensional $(G,\chi)$-color vector space of $G$-degree $1$, with the abelian and coabelian Lie bialgebra structure. 
Let $J\in \cC $ be a two-dimensional homogeneous $G$-graded vector space of $G$-degree $g$ and let $x_1,x_2$ be a basis of $J$. 

\begin{pro} \label{pr:Jordan}
  Let $J\in \lcrmC{\ff}$ with the following action and coaction of $\ff$:
$$ \lambda(x)=s\ot x \quad
\text{for all $x\in J$,}
$$
and
$$
s\cdot x_1=0,\quad s\cdot x_2=x_1,
$$
where $s\cdot x=\alpha(s\ot x)$ for all $x\in J$.
Then $J$ is a curved Lie bialgebra in $(\lcrmC{\ff},\crmh)$ with zero bracket, zero cobracket, and the natural infinitesimal braiding $\crmh $ of $\lcrmC{\ff}$.
\end{pro}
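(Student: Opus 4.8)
The plan is to verify directly the three conditions defining a curved Lie bialgebra for the triple $(J,0,0)$ in the Cartier category $(\lcrmC{\ff},\crmh)$: that $J$ with the given $\ff$-action and $\ff$-coaction is an object of $\lcrmC{\ff}$, that the zero bracket and zero cobracket satisfy the (trivially true) Lie algebra and Lie coalgebra axioms, and that the bialgebra compatibility \eqref{eq:bialgebra_compatibility} holds with $\crmh_{J,J}$ on the right-hand side. Since $\beta_J=0$ and $\delta_J=0$, the first two sets of axioms are automatic, and the compatibility condition \eqref{eq:bialgebra_compatibility} collapses to the single requirement $0=(\tau-\id)\crmh_{J,J}$, i.e.\ that $\crmh_{J,J}$ is $\tau$-symmetric as an endomorphism of $J\ot J$; but since $\tau\crmh=\crmh\tau$ always holds (Proposition~\ref{pr:cmfriendly}) and here we need $(\tau-\id)\crmh_{J,J}=0$, the real content is to compute $\crmh_{J,J}$ explicitly and check it vanishes, or at least lies in the $+1$-eigenspace of $\tau$ in a way that makes $(\tau-\id)\crmh_{J,J}=0$.

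First I would check that $(J,\alpha,\lambda)\in\lcrmC{\ff}$. The comodule axiom is immediate: by Lemma~\ref{lem:coabcoaction} (applied with all $f_i=s$), $\lambda(x)=s\ot x$ is a left Lie comodule structure since $\ff$ is coabelian. For the Lie module axiom over the abelian Lie algebra $\ff$ (whose bracket $\beta_\ff$ is zero), the diagram in Section~\ref{section:lie} reduces to $\alpha(\id\ot\alpha)(\id-\tau_{12})=0$ on $\ff\ot\ff\ot J$; since $\ff$ is one-dimensional spanned by $s$ with $\chi(1,1)=\chi(g,g)=1$, one computes $\tau_{12}(s\ot s\ot x)=s\ot s\ot x$, so $\id-\tau_{12}=0$ on $\ff\ot\ff\ot J$ and the axiom holds. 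Then I would verify the crossed module compatibility \eqref{eq:lcrm}: with $\beta_\ff=0$, $\delta_\ff=0$, $\eta=0$ in $\cC$, the right-hand side is $(\id\ot\alpha)(\tau_{\ff,\ff}\ot\id)(\id\ot\lambda)$, and since $\tau_{\ff,\ff}=\id$ on $\ff\ot\ff$, this equals $(\id\ot\alpha)(\id\ot\lambda)=\id\ot(\alpha\lambda)$; one then checks $\lambda\alpha=\id_\ff\ot(\alpha\lambda)$ on $\ff\ot J$ by evaluating both sides on $s\ot x_1$ and $s\ot x_2$ using $\alpha\lambda(x)=\alpha(s\ot x)=s\cdot x$, which is routine.

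Next comes the heart of the argument: computing $\crmh_{J,J}=\zeta_{J,J}\tau_{J,J}+\tau_{J,J}\zeta_{J,J}+\eta_{J,J}$ where $\zeta_{J,J}=(\alpha\ot\id_J)(\id_\ff\ot\tau_{J,J})(\lambda\ot\id_J)$ and $\eta_{J,J}=0$ in $\cC$. On a basis element $x_i\ot x_j$ (with $i,j\in\{1,2\}$, each $x_i$ of $G$-degree $g$, so $\tau_{J,J}(x_i\ot x_j)=\chi(g,g)\,x_j\ot x_i=x_j\ot x_i$), one gets $\zeta_{J,J}(x_i\ot x_j)=(\alpha\ot\id_J)(\id_\ff\ot\tau_{J,J})(s\ot x_i\ot x_j)=(\alpha\ot\id_J)(s\ot x_j\ot x_i)=(s\cdot x_j)\ot x_i$. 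So $\zeta_{J,J}(x_i\ot x_j)=(s\cdot x_j)\ot x_i$, giving explicitly $\zeta_{J,J}(x_i\ot x_1)=0$ and $\zeta_{J,J}(x_i\ot x_2)=x_1\ot x_i$. I would then assemble $\crmh_{J,J}=\zeta_{J,J}\tau_{J,J}+\tau_{J,J}\zeta_{J,J}$ on all four basis vectors and read off $(\tau_{J,J}-\id)\crmh_{J,J}$, expecting the mixed terms to produce a $\tau$-symmetric endomorphism (plausibly $\crmh_{J,J}(x_i\ot x_j)=\delta_{i2}\,x_j\ot x_1+\delta_{j2}\,x_1\ot x_i$ up to the computation, which is manifestly symmetric under swapping the two tensor legs together with $\tau$), so that $(\tau_{J,J}-\id)\crmh_{J,J}=0$ and \eqref{eq:bialgebra_compatibility} is satisfied. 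The main obstacle is purely computational bookkeeping: keeping the order of the tensor legs and the (here trivial, since $\chi(g,g)=1$) sign factors straight while composing $\zeta$ with $\tau$ in both orders; there is no conceptual difficulty since all brackets and cobrackets vanish and the base category has zero infinitesimal braiding, but one must be careful that $\crmh_{J,J}$ genuinely lands in the kernel of $\tau-\id$ rather than merely commuting with $\tau$.
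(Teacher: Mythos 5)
Your proposal is correct and follows essentially the same route as the paper's proof: verify the crossed module structure directly (using that $\ff$ is abelian, coabelian and $\tau_{\ff,\ff}=\id$ on $\Bbbk s\ot \Bbbk s$), note that the Lie algebra and Lie coalgebra axioms are vacuous for zero bracket and cobracket, reduce \eqref{eq:bialgebra_compatibility} to $(\tau-\id)\crmh_{J,J}=0$, and compute $\crmh_{J,J}$ explicitly on the basis. The only slip is in your tentative formula: the computation gives $\crmh_{J,J}(x\ot y)=s\cdot x\ot y+x\ot s\cdot y$ (your guessed expression has the tensor legs of each term swapped), but since the image in either case consists of $\tau$-symmetric tensors, the conclusion $(\tau-\id)\crmh_{J,J}=0$ and hence the proposition are unaffected.
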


It is illuminating to put the definition of the curved Lie bialgebra $J$ next to the description of the Nichols algebra of the Jordan plane in \cite[Prop.\,3.2.1]{MR4298502}.  We call $J$ the Lie bialgebra of the Jordan plane.

\begin{proof}
Since $\tau(s\ot s)=s\ot s$, it is easy to see that $(J,\alpha)\in \lmC{\ff}$ and $(J,\lambda )\in \lcmC{\ff}$.
Since $\ff$ is abelian and coabelian, the crossed module axiom~\eqref{eq:lcrm} is equivalent to
$$ \lambda (s\cdot x)=
(\id \ot \alpha)(\tau_{\ff,\ff}\ot \id)(\id \ot \lambda )(s\ot x)\quad 
\text{for all $x\in J$,}
$$
which is clearly satisfied. Thus $J\in \lcrmC{\ff}$.

The crossed module $J\in \lcrmC{\ff}$ is a Lie algebra with zero bracket and a Lie coalgebra with zero cobracket.
The infinitesimal braiding of $J$ in $\lcrmC{\ff}$
in the notation of Lemma~\ref{le:zeta} is
$$ \crmh_{J,J}(x\ot y)=(\zeta \tau +\tau \zeta )(x\ot y)=s\cdot x\ot y
+x\ot s\cdot y $$
for all $x,y\in J$. Since $x_1\ot x_1,x_1\ot x_2+x_2\ot x_1,x_2\ot x_2\in \ker (\tau-\id )$, it follows that
$$ (\tau -\id)(J\ot J)=\Bbbk (x_1\ot x_2-x_2\ot x_1),\qquad
(\tau -\id)\crmh _{J,J}=0. $$
Therefore the zero bracket and zero cobracket satisfy the Lie bialgebra axiom \eqref{eq:bialgebra_compatibility}, and hence $J$ is a curved Lie bialgebra in $(\lcrmC{\ff},\crmh)$. 
\end{proof}

\subsection{The super Jordan plane}

This is our first example of a nonabelian non-coabelian curved Lie bialgebra structure. It is illuminating to put it next to the description of the Nichols algebra of the super Jordan plane in \cite[Prop.\,3.3.1]{MR4298502}.
We do not put any assumptions on the field $\Bbbk$, not even on its characteristic, but make a comment on this after explaining the example in Proposition~\ref{pr:superJordan} below.

Assume that $G=\langle g\rangle \cong (\ndZ,+)$ and $\chi(g,g)=-1$.
Let $\ff=\Bbbk s$ be a one-dimensional $(G,\chi)$-color vector space of $G$-degree $1$, with the abelian and coabelian Lie bialgebra structure.
Let $J^-$ be a four-dimensional homogeneous $G$-graded vector space with basis
$$ x_{11}, x_{12},x_{21},x_{22}, $$
where $x_{ij}\in J^-_{g^i}$ for all $i,j\in \{1,2\}$. For convenience, we write $x_{i0}=0$ for all $i\in \{1,2\}$.
When comparing $J^-$ with the super Jordan plane in 
\cite[Prop.\,3.3.1]{MR4298502},
the generators $x_{1i}$ with $i\in \{1,2\}$ here should be identified with the generators $x_i$ there.

The definition of $J^-$ implies that
$$ \tau(x_{ij}\ot x_{kl})
=\chi(g^i,g^k)x_{kl}\ot x_{ij}=(-1)^{ik}x_{kl}\ot x_{ij} $$
for all $i,j,k,l\in \{1,2\}$.

\begin{pro} \label{pr:superJordan}
  Let $J^-\in \lcrmC{\ff}$ with the following action and coaction of $\ff$:
\begin{align}
  \lambda(x_{ij})&=is\ot x_{ij} \quad \text{for all $i,j$,}\\
  s\cdot x_{ij}&=x_{i,j-1}
  \quad \text{for all $i,j$},
\end{align}
where $s\cdot x=\alpha(s\ot x)$ for all $x\in J^-$.
Then $J^-$ is a curved Lie bialgebra in $(\lcrmC{\ff},\crmh)$ with
the natural (non-zero) infinitesimal braiding $\crmh $ of $\lcrmC{\ff}$ and
the following 
bracket $\beta $ and cobracket $\delta $:
\begin{align}
  \beta (x_{1i}\ot x_{1j}) &=(i+j-2)x_{2,i+j-2},\\
  \beta (x_{2i}\ot x) &=
  \beta(x\ot x_{2i})=0,\\
  \delta (x_{1i})&=0,\\
  \delta(x_{2i})
  &=(\tau -\id)(x_{11}\ot x_{1i})
  =-x_{11}\ot x_{1i}-x_{1i}\ot x_{11}
\end{align}
for all $i,j\in \{1,2\}$, $x\in J^-_{g^2}$.
\end{pro}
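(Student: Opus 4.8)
The plan is to verify the three structural requirements in turn: (i) $J^-$ is a left crossed module over $\ff$, i.e.\ $(J^-,\alpha)\in\lmC{\ff}$, $(J^-,\lambda)\in\lcmC{\ff}$, and the compatibility \eqref{eq:lcrm} holds; (ii) $\beta$ and $\delta$ are morphisms in $\lcrmC{\ff}$ making $(J^-,\beta)$ a Lie algebra and $(J^-,\delta)$ a Lie coalgebra in $\lcrmC{\ff}$; (iii) the curved Lie bialgebra compatibility \eqref{eq:bialgebra_compatibility} holds in $(\lcrmC{\ff},\crmh)$. Throughout I would use that $\ff=\Bbbk s$ is one-dimensional of $G$-degree $1$ with abelian bracket and coabelian cobracket, so $\beta_\ff=0$ and $\delta_\ff=0$, which kills many terms; in particular \eqref{eq:lcrm} collapses (as in the Jordan plane proof) to $\lambda\circ\alpha=(\id\ot\alpha)(\tau_{\ff,\ff}\ot\id)(\id\ot\lambda)-\eta$, and since $\eta=0$ in $\cC$ this is a short direct check on the basis $\{s\ot x_{ij}\}$ using $\tau(s\ot s)=-s\ot s$ and $\lambda(x_{ij})=is\ot x_{ij}$.

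For step (i): the Lie-module axiom for $\alpha$ reads $\alpha(\beta_\ff\ot\id)=\alpha(\id\ot\alpha)(\id-\tau_{12})$, and the left-hand side is $0$; I must check the right-hand side vanishes, which amounts to $\alpha(\id\ot\alpha)$ being symmetric under $\tau_{12}$ on $\ff\ot\ff\ot J^-$ — immediate because $s\cdot(s\cdot x_{ij})=x_{i,j-2}=0$ for $j\le 2$ and $\tau_{12}(s\ot s\ot x)=-s\ot s\ot x$ while the double action itself is $0$. Dually, the Lie-comodule axiom for $\lambda$ follows since $\delta_\ff=0$ forces the $\delta\ot\id$ leg to vanish and $(\id-\tau_{12})(\id\ot\lambda)\lambda(x_{ij})=(\id-\tau_{12})(is\ot is\ot x_{ij})=i^2(1-(-1))s\ot s\ot x_{ij}$... wait, one must be careful: $\tau_{12}(s\ot s\ot x)=\chi(g,g)\,s\ot s\ot x=-s\ot s\ot x$, so $(\id-\tau_{12})$ gives $2i^2 s\ot s\ot x_{ij}$, which need not be zero. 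I would instead invoke Lemma~\ref{lem:coabcoaction} directly, which says exactly that $\lambda(x_i)=f_i\ot x_i$ on a homogeneous basis defines a Lie comodule over a trivially-graded coabelian $\ff$ — but here $\ff$ has degree $1$, not trivial degree, so Lemma~\ref{lem:coabcoaction} does not literally apply and I genuinely need the direct computation; the resolution is that the comodule axiom has the form $\lambda\alpha'$-shape with $\delta_\ff\ot\id=0$, so it reduces to $(\delta\ot\id)\lambda = (\id-\tau_{12})(\id\ot\lambda)\lambda$ and the \emph{left} side is zero, forcing me to check $(\id-\tau_{12})(\id\ot\lambda)\lambda=0$; since $\tau_{12}$ acts on the $s\ot s$ part by $-1$, this is $(\id-\tau_{12})(i^2 s\ot s\ot x_{ij})=2i^2\,s\ot s\ot x_{ij}$, which is nonzero in characteristic $\ne 2$ when $i=2$ — so I have the comodule axiom backwards. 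Re-reading the diagram in Section~\ref{section:lieco}: the comodule axiom is $(\delta\ot\id-(\id-\tau_{12})(\id\ot\lambda))\lambda=0$, and with $\delta_\ff=0$ this really does demand $(\id-\tau_{12})(\id\ot\lambda)\lambda=0$; the correct observation is that on $x_{ij}$ we get $i\cdot i\,(s\ot s\ot x_{ij}) - i\cdot i\,\tau_{12}(s\ot s\ot x_{ij}) = i^2(1+1)s\ot s\ot x_{ij}$, hence I actually need $2i^2=0$, false — so something in my reading of $\lambda$ or $\tau$ is off and \textbf{pinning down the sign/coefficient conventions so that the comodule axiom holds is the first real task}. (The likely fix: $\tau_{12}$ on $\ff\ot\ff$ contributes $\chi(g,g)=-1$, so $(\id-\tau_{12})(s\ot s\ot-)=2(s\ot s\ot-)$, and the genuinely operative identity is co-antisymmetry of $\delta$ plus the fact that $\lambda$ lands in a subspace where the relevant alternation vanishes — I would work this out carefully with the actual diagram orientation.)

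For step (ii): antisymmetry $\beta\tau=-\beta$ is a finite check — $\beta(\tau(x_{1i}\ot x_{1j}))=-\beta(x_{1j}\ot x_{1i})=-(i+j-2)x_{2,i+j-2}$ matches $-\beta(x_{1i}\ot x_{1j})$, and all brackets with a degree-$g^2$ slot vanish on both sides — and the Jacobi identity \eqref{eq:Jacobieq} is automatic because $\beta(\id\ot\beta)$ already factors through the degree-$g^3$ part where all $x_{\bullet\bullet}$ with first index $\ge 3$ are zero, so $\beta(\id\ot\beta)=0$. Co-antisymmetry and co-Jacobi for $\delta$ are handled identically on the $\ge g^3$ / $\le g^{-1}$ degrees. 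That $\beta,\delta$ are morphisms in $\lcrmC{\ff}$ (module, comodule, and crossed compatibility in the source/target tensor powers) is a routine grading-plus-naturality check, the key inputs being the diagonal formulas \eqref{eq:alpha_tensor} and \eqref{eq:lambda_tensor}.

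For step (iii), the heart of the proof: I first compute $\crmh_{J^-,J^-}=\zeta_{J^-,J^-}\tau+\tau\zeta_{J^-,J^-}+\eta_{J^-,J^-}$ with $\eta=0$ and $\zeta_{V,W}=(\alpha_W\ot\id_V)(\id_\ff\ot\tau_{V,W})(\lambda_V\ot\id_W)$; on $x_{ij}\ot x_{kl}$ this gives $\zeta(x_{ij}\ot x_{kl})=i\,\chi(g^i,g^k)\,(s\cdot x_{kl})\ot x_{ij}=i(-1)^{ik}x_{k,l-1}\ot x_{ij}$, and then $\crmh$ on a basis vector is an explicit sum of two such terms — a finite table. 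Then I must verify \eqref{eq:bialgebra_compatibility}, i.e.\ $\delta\beta=(\id-\tau)(\beta\ot\id)(\id\ot\tau)(\delta\ot\id)(\id-\tau)+(\tau-\id)\crmh$, as endomorphisms of $J^-\ot J^-$. Since both sides are $\tau$-alternating, it suffices to evaluate on the images of $(\id-\tau)$, i.e.\ on antisymmetrized basis products; I would reduce to a handful of generating bidegrees: $g^2$ (i.e.\ $x_{1i}\ot x_{1j}$-combinations), $g^3$, $g^4$, and the mixed ones involving an $x_{2\bullet}$. On the $g^2$ piece the interesting check is that $\delta\beta(x_{1i}\ot x_{1j})=(i+j-2)\delta(x_{2,i+j-2})=(i+j-2)(\tau-\id)(x_{11}\ot x_{1,i+j-2})$ equals the right-hand side, where $\delta\ot\id$ vanishes (since $\delta x_{1\bullet}=0$) so the first RHS term is $0$ and everything must come from $(\tau-\id)\crmh$ — this is where the nonzero $\crmh$ is forced to carry the bracket-cobracket interaction, and confirming the coefficient $(i+j-2)$ comes out right is \textbf{the main obstacle and the whole point of the example}. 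The remaining bidegrees I expect to reduce to $0=0$ by degree reasons (first indices exceeding $2$). I would organize this as a short case analysis over bidegrees, displaying only the $g^2$ and one mixed case in full and asserting the rest vanish for degree reasons. Finally I would remark, as promised, on the characteristic: the only place $2$ could be inverted or vanish is in the comodule axiom and in the $(\id-\tau)$ reductions, and I would note explicitly that the stated formulas are consistent over any $\Bbbk$ because the problematic terms lie in $\ker(\tau-\id)$ regardless of characteristic — mirroring the Jordan-plane argument where $(\tau-\id)\crmh_{J,J}=0$.
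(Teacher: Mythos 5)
There is a genuine gap, and it comes from a single misreading that then derails your step (i): you place the generator $s$ of $\ff$ in $G$-degree $g$, so that $\tau(s\ot s)=\chi(g,g)\,s\ot s=-s\ot s$. In the paper $\ff=\Bbbk s$ is a color vector space of $G$-degree $1$, the \emph{neutral element} of $G$, so $\chi(1,1)=1$ and $\tau(s\ot s)=s\ot s$ (this is stated explicitly in the Jordan-plane proof and is the same convention here); likewise $\tau_{\ff,J^-}$ and $\tau_{J^-,\ff}$ carry no sign. With the correct degree, the point you flag as ``the first real task'' evaporates: the comodule axiom $(\delta_\ff\ot\id)\lambda=(\id-\tau_{12})(\id\ot\lambda)\lambda$ has left side $0$ because $\delta_\ff=0$, and right side $i^2(\id-\tau_{12})(s\ot s\ot x_{ij})=0$ because $\tau_{12}$ fixes $s\ot s\ot x_{ij}$; similarly the crossed-module identity $\lambda\alpha=(\id\ot\alpha)(\tau_{\ff,\ff}\ot\id)(\id\ot\lambda)$ checks out with the $+$ sign (with your $-$ sign it would actually fail). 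Your proposed ``likely fix'' (co-antisymmetry of $\delta$ plus alternation vanishing) is not the resolution, so as written the verification that $J^-\in\lcrmC{\ff}$ is left broken rather than completed.

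A second, smaller issue is in step (iii): your outline (compute $\crmh=\zeta\tau+\tau\zeta$ on the basis, then check \eqref{eq:bialgebra_compatibility} bidegree by bidegree) matches the paper's strategy, and your formula $\zeta(x_{ij}\ot x_{kl})=i(-1)^{ik}x_{k,l-1}\ot x_{ij}$ is correct, but the claim that everything outside the $g\ot g$ part ``reduces to $0=0$ by degree reasons'' is not accurate. In the paper the compatibility is reduced to a $\ker(\tau-\id)$ membership and then checked in all four cases $(i,k)\in\{1,2\}^2$: the mixed cases $(2,1)$, $(1,2)$ and the case $i=k=1$, $j=l=2$ require genuine cancellations among several terms (using $\chi(g,g)=-1$, e.g.\ $x_{11}\ot x_{12}-x_{12}\ot x_{11}\in\ker(\tau-\id)$), not mere vanishing of high-degree components. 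So beyond fixing the degree of $s$, the central verification still has to be carried out case by case rather than dismissed; once you do both, your argument coincides with the paper's proof.
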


Using Kronecker's delta,
$$ \Kd_{ij}=\begin{cases} 1 & \text{if $i=j$,}\\ 0 & \text{if $i\ne j$,}
\end{cases}
$$
the definition of the bracket and cobracket can also be written as
\begin{align}
  \beta(x_{ij}\ot x_{kl})&=
  \Kd_{i1}\Kd_{k1}(j+l-2)x_{2,j+l-2},\\
  \delta(x_{ij})&=\Kd_{i2}(\tau-\id)
  (x_{11}\ot x_{1j})
  =-\Kd _{i2}x_{11}\ot x_{1j}
  -\Kd _{i2}x_{1j}\ot x_{11}
\end{align}
for all $i,j,k,l\in \{1,2\}$.

We call $J^-$ the Lie bialgebra of the super Jordan plane.

Recall that we did not put any assumption on the field $\Bbbk$. Assume now that the characteristic of $\Bbbk$ is $2$. Then $\Bbbk x_{21}$ is a one-dimensional subobject of $J^-$ in $\lcrmC{\ff}$ and a Lie ideal of $J^-$. Moreover, $\delta (x_{21})=0$, which allows to take the quotient $J^-/\Bbbk x_{21}$. This does not work for other fields.

\begin{proof}
It is fairly clear that $(J^-,\alpha)\in \lmC{\ff}$ and $(J^-,\lambda)\in \lcmC{\ff}$.
Since $\ff $ is abelian and coabelian,
the crossed module axion \eqref{eq:lcrm} is equivalent to
$$ \lambda (s\cdot x)=
(\id \ot \alpha )(\tau \ot \id)(\id \ot \lambda )(s\ot x )\quad
\text{for all $x\in J^-$}, $$
which follows directly from $\alpha (s\ot J^-_{g^i})\subseteq J^-_{g^i}$ for all $i\in \{1,2\}$.
Thus $J^-\in \lcrmC{\ff}$.

It is clear that $\beta $ and $\delta $ are morphisms in $\lcmC{\ff}$. Moreover,
for all $i,j,k,l\in \{1,2\}$ we obtain that
\begin{align*}
s\cdot \beta(x_{ij}\ot x_{kl})&=
s\cdot \Kd_{i1}\Kd_{k1}(j+l-2)x_{2,j+l-2}
=\Kd_{i1}\Kd_{k1}\Kd_{j2}\Kd_{l2}2x_{21},
\\
\beta(s\cdot (x_{ij}\ot x_{kl}))&=
\beta(\Kd_{j2}x_{i1}\ot x_{kl}
+\Kd_{l2}x_{ij}\ot x_{k1})
=\Kd_{i1}\Kd_{k1}\Kd_{j2}\Kd_{l2}
2x_{21},\\
s\cdot \delta(x_{ij})&=s\cdot
(-\Kd_{i2}x_{11}\ot x_{1j}
-\Kd_{i2}x_{1j}\ot x_{11})
=-2\Kd_{i2}\Kd_{j2}x_{11}\ot x_{11},\\
\delta(s\cdot x_{ij})
&=\delta(\Kd_{j2}x_{i1})
=\Kd_{j2}\Kd_{i2}(-2)x_{11}\ot x_{11}.
\end{align*}
Hence $\beta$ and $\delta$ are morphisms in $\lcrmC{\ff}$.

The antisymmetry of $\beta $ and the co-antisymmetry of $\delta $ are fairly obvious. Since the image of $\beta \ot \id -(\id \ot \beta)(\id-\tau_{12})$ is contained in $J^-_{g^2}\ot J^-+J^-\ot J^-_{g^2}\subseteq \ker \beta$, $\beta $ satisfies  the Jacobi identity. Similarly,
$$ \delta (J^-)\subseteq J^-_g\ot J^-_g
\subseteq \ker \big(\delta \ot \id -(\id -\tau_{12})(\id \ot \delta)\big). $$
Thus $(J^-,\beta)$ is a Lie algebra in $\lcrmC{\ff}$ and $(J^-,\delta)$ is a Lie coalgebra in $\lcrmC{\ff}$.

The infinitesimal braiding of $J^-$ in $\lcrmC{\ff}$
in the notation of Lemma~\ref{le:zeta} is
\begin{align*}
  \crmh_{J^-,J^-}(x_{ij}\ot x_{kl})&=(\zeta \tau +\tau \zeta )(x_{ij}\ot x_{kl})\\
  &=ks\cdot x_{ij}\ot x_{kl}+x_{ij}\ot is\cdot x_{kl}\\
  &=k\Kd_{j2}x_{i1}\ot x_{kl}
  +i\Kd_{l2}x_{ij}\ot x_{k1}
\end{align*}
for all $i,j,k,l\in \{1,2\}$. In particular, $(\tau -\id)\crmh _{J^-,J^-}\ne 0$.

We check the Lie bialgebra axiom
\eqref{eq:bialgebra_compatibility} for $\beta $ and $\delta$.

For all $i,j,k,l\in \{1,2\}$
we obtain directly from the definitions that
\begin{align*}
    \delta \beta(x_{ij}\ot x_{kl})
    &=\Kd_{i1}\Kd_{k1}(j+l-2)
    \delta(x_{2,j+l-2})\\
    &=\Kd_{i1}\Kd_{k1}(j+l-2)
    (\tau-\id)(x_{11}\ot x_{1,j+l-2}).
\end{align*}
Moreover,
\begin{align*}
  (\id-\tau)&(\beta \ot \id)(\id \ot \tau)(\delta \ot \id)(\id-\tau)(x_{ij}\ot x_{kl})\\
  &=(\id-\tau)(\beta \ot \id)(\id \ot \tau)(\delta \ot \id)(x_{ij}\ot x_{kl}-(-1)^{ik}x_{kl}\ot x_{ij})\\
  &=(\id-\tau)(\beta \ot \id)(\id \ot \tau)(-\Kd_{i2}x_{11}\ot x_{1j}\ot x_{kl}-\Kd_{i2}x_{1j}\ot x_{11}\ot x_{kl}\\
  &\qquad \qquad \qquad
  +\Kd_{k2}x_{11}\ot x_{1l}\ot x_{ij}
  +\Kd_{k2}x_{1l}\ot x_{11}\ot x_{ij})\\
  &=(\id-\tau)(\Kd_{i2}\Kd_{k1}\Kd_{l2}x_{21}\ot x_{1j}
  +\Kd_{i2}\Kd_{k1}(j+l-2)x_{2,j+l-2}\ot x_{11}\\
  &\qquad \qquad \qquad
  -\Kd_{i1}\Kd_{j2}\Kd_{k2}x_{21}\ot x_{1l}
  -\Kd_{i1}\Kd_{k2}(j+l-2)x_{2,j+l-2}\ot x_{11}
  ).
\end{align*}
Since the three terms obtained are of the form 
$\tau-\id$ evaluated on something, 
the Lie bialgebra axiom for $\beta$ and $\delta$ is equivalent to
\begin{equation}
\label{eq:Liebialg-superJordan}
\begin{aligned}
    &(j+l-2)\Kd_{i1}\Kd_{k1}x_{11}\ot x_{1,j+l-2}+\Kd_{i2}\Kd_{k1}\Kd_{l2}x_{21}\ot x_{1j}\\
    &\quad +\Kd_{i2}\Kd_{k1}
    (j+l-2)x_{2,j+l-2}\ot x_{11}
    -\Kd_{i1}\Kd_{j2}\Kd_{k2}x_{21}\ot x_{1l}\\
    &\quad
    -\Kd_{i1}\Kd_{k2}(j+l-2)x_{2,j+l-2}\ot x_{11}
    -k\Kd_{j2}x_{i1}\ot x_{kl}
    -i\Kd_{l2}x_{ij}\ot x_{k1}\\
    &\quad
    \in \ker(\tau-\id).
\end{aligned}
\end{equation}
For $i=k=2$, Equation~\eqref{eq:Liebialg-superJordan} is equivalent to
$$ -2\Kd_{j2}x_{21}\ot x_{2l}
-2\Kd_{l2}x_{2j}\ot x_{21}\in \ker(\tau-\id),$$
which is clear whenever $j=1$ or $l=1$ or $j=l$, and hence in all cases.

For $i=2$, $k=1$, 
Equation~\eqref{eq:Liebialg-superJordan} is equivalent to
\begin{align*}
    &\Kd_{l2}x_{21}\ot x_{1j}
    +(j+l-2)x_{2,j+l-2}\ot x_{11}
    -\Kd_{j2}x_{21}\ot x_{1l}
    -2\Kd_{l2}x_{2j}\ot x_{11}\\
    &\quad \in \ker(\tau-\id).
\end{align*}
One checks for all possible pairs $(j,l)$ that the expression on the left hand side of the relation is in fact already $0$.

For $i=1$, $k=2$,
Equation~\eqref{eq:Liebialg-superJordan} is equivalent to
\begin{align*}
    &-\Kd_{j2}x_{21}\ot x_{1l}
    -(j+l-2)x_{2,j+l-2}\ot x_{11}
    -2\Kd_{j2}x_{11}\ot x_{2l}
    -\Kd_{l2}x_{1j}\ot x_{21}\\
    &\quad
    \in \ker(\tau-\id).
\end{align*}
By going through all four possibilities for the pair $(j,l)$ 
one checks that the relation holds.

Finally, for $i=1$, $k=1$, 
Equation~\eqref{eq:Liebialg-superJordan} is equivalent to
\begin{align*}
    &(j+l-2)x_{11}\ot x_{1,j+l-2}
    -\Kd_{j2}x_{11}\ot x_{1l}
    -\Kd_{l2}x_{1j}\ot x_{11}
    \in \ker(\tau-\id).
\end{align*}
Here, if $(j,l)\ne (2,2)$, then the expression on the left hand side of the relation is already $0$. If however $j=l=2$, then the condition simplifies to
$$ x_{11}\ot x_{12}-x_{12}\ot x_{11}\in \ker (\tau-\id),$$
which is easily checked.

We conclude that $(J^-,\beta,\delta)$ satisfies the Lie bialgebra axiom and hence it is a curved Lie bialgebra in $(\lcrmC{\ff},\crmh)$, and the proof is completed.
\end{proof}

\subsection{The Laistrygonian Lie bialgebras}

In this section we use a larger abelian coabelian Lie bialgebra $\ff$, but the general approach remains similar. We identify an infinite family of curved Lie bialgebras, see Proposition~\ref{pr:Lone} below, each of them containing the Jordan plane as a subobject.

Assume that $G=\langle g,h\rangle \cong (\ndZ^2,+)$ and
$$ \chi(g,g)=\chi(h,h)=1. $$
Let $\cC$ be the category of $(G,\chi)$-color vector spaces
(with zero infinitesimal braiding)
over a field $\Bbbk$ of characteristic $\ne2$.
Let
$$ \ff=\ff_1=\Bbbk s+\Bbbk t $$
be a two-dimensional $(G,\chi)$-color vector space of $G$-degree $1$, with the abelian and coabelian Lie bialgebra structure:
$$ \beta_\ff=0,\quad \delta_\ff=0. $$
Let $\cG\in \mathbb{N}_0$ and
let
\begin{gather*}
  \Lone =\Lone_g\oplus \bigoplus_{k=0}^\cG \Lone_{g^kh}
\in \cC,\\
\Lone_g=\Bbbk x_1+\Bbbk x_2,\quad
\Lone_{g^kh}=\Bbbk z_k \quad \text{for all $0\le k\le \cG$,}
\end{gather*}
be a $\cG+3$-dimensional $G$-graded vector space.
Let $\Loone $ be the subspace of $\Lone $ spanned by $x_1$ and $z_k$, $0\le k\le \cG$. Clearly, $\Loone \in \cC$.

\begin{pro} \label{pr:Lone}
  Let $\cG\in \mathbb{N}_0$. Then $\Lone \in \lcrmC{\ff}$ with the following action and coaction of $\ff$:
$$ \lambda(x_1)=s\ot x_1,\quad
\lambda (x_2)=s\ot x_2,\quad
\lambda (z_k)=(ks+t)\ot z_k
$$
for all $0\le k\le \cG$,
and
$$
s\cdot x_2=x_1,\quad t\cdot x_2=-\frac{\cG}2x_1,
\quad s\cdot x=t\cdot x=0
$$
for all $x\in \Loone $,
where $f\cdot x=\alpha(f\ot x)$ for all $f\in \ff $ and $x\in \Lone $.
Moreover, $\Lone $ is a curved Lie bialgebra in the category $(\lcrmC{\ff},\crmh )$ with the natural (non-zero) infinitesimal braiding $\crmh $ of $\lcrmC{\ff}$, bracket $\beta $ and cobracket $\delta $, where
\begin{align*}
    \beta (x\ot y)&=0 \quad \text{for all $x,y\in \Loone $,}\\
    \beta(x_2\ot x_1) &=\beta(x_2\ot x_2) =\beta(x_1\ot x_2)=0,\\
    \beta(x_2\ot z_k) &=-\beta \tau (x_2\ot z_k) =z_{k+1},\\
    \delta(x_1)&=\delta(x_2)=0,\\
  \delta (z_k)&=
  \frac {k(k-1-\cG)}2 (\tau-\id)(x_1\ot z_{k-1})
\end{align*}
for all $0\le k\le \cG$ (with the convention $z_{\cG+1}=0$).
\end{pro}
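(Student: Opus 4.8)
The plan is to mimic the proofs of Propositions~\ref{pr:Jordan} and~\ref{pr:superJordan}. First I would check that $\Lone\in\lcrmC{\ff}$. Since $\beta_\ff=0$ and the only nonzero single actions are $s\cdot x_2=x_1$ and $t\cdot x_2=-\tfrac{\cG}2 x_1$, every iterated action $f\cdot(f'\cdot v)$ vanishes, which yields the Lie module axiom; the Lie comodule axiom is immediate from Lemma~\ref{lem:coabcoaction} applied to the family $f_{x_1}=f_{x_2}=s$, $f_{z_k}=ks+t$ (writing $\lambda(v)=f_v\ot v$). Because $\cC$ has zero infinitesimal braiding and $\ff$ is abelian and coabelian, the crossed module axiom \eqref{eq:lcrm} collapses on $\ff\ot\Lone$ to $\lambda(f\cdot v)=f_v\ot(f\cdot v)$, which one verifies on the basis. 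Next I would check that $\beta$ and $\delta$ preserve the $G$-grading (so are morphisms in $\cC$; the conventions $z_{\cG+1}=0$ and $\tfrac{0\cdot(0-1-\cG)}{2}=0$ make the boundary cases consistent) and are morphisms in $\lcrmC{\ff}$; as $\ff$ sits in degree $1$ this amounts to the Leibniz identities $\beta((f\cdot a)\ot b)+\beta(a\ot(f\cdot b))=f\cdot\beta(a\ot b)$, $\lambda\beta(a\ot b)=(f_a+f_b)\ot\beta(a\ot b)$ and their $\delta$-analogues; here the weights $f_{z_k}=ks+t$ are precisely what makes $\beta(x_2\ot z_k)=z_{k+1}$ and $\delta(z_k)$ weight-homogeneous. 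The antisymmetry of $\beta$ and co-antisymmetry of $\delta$ are clear from $\tau^2=\id$, and the Jacobi and co-Jacobi identities follow from the triangular shape of the structure maps by a short direct computation: the image of $\beta$ is spanned by $z_1,\dots,z_\cG$ and $\beta$ kills $\Loone\ot\Loone$, while $\delta(\Lone)\subseteq\Loone\ot\Loone$ with $\delta(x_1)=0$.

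Using Lemma~\ref{le:zeta}(2) together with the diagonal coaction and the fact that the scalars from the two occurrences of $\tau$ cancel (we have $\chi(g,g)=\chi(h,h)=1$), the infinitesimal braiding of $\Lone$ in $\lcrmC{\ff}$ is $\crmh_{\Lone,\Lone}(a\ot b)=(f_b\cdot a)\ot b+a\ot(f_a\cdot b)$. On the basis this gives $\crmh(x_2\ot z_k)=(k-\tfrac{\cG}2)x_1\ot z_k$, $\crmh(z_k\ot x_2)=(k-\tfrac{\cG}2)z_k\ot x_1$, $\crmh(x_2\ot x_2)=x_1\ot x_2+x_2\ot x_1$, $\crmh(x_1\ot x_2)=\crmh(x_2\ot x_1)=x_1\ot x_1$, and $\crmh$ vanishes on all remaining basis pairs; in particular $\crmh\ne0$.

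It remains to verify \eqref{eq:bialgebra_compatibility}, with $\eta=\crmh_{\Lone,\Lone}$. Both sides of that equation satisfy $\Phi\tau=-\Phi$, so it suffices to test it on one element of each pair $\{a\ot b,\,b\ot a\}$ of basis vectors, and on $\tau$-invariant basis vectors (such as $x_i\ot x_i$ and $z_k\ot z_k$) both sides vanish in characteristic $\ne2$. For $a\ot b\in\{x_1\ot x_2,\ x_1\ot z_k,\ z_k\ot z_l\}$ with $k\ne l$, all three summands on each side are $0$ because $\beta$ and $\delta$ vanish on the subspaces that occur. The one genuine case is $x_2\ot z_k$: there $\delta\beta(x_2\ot z_k)=\tfrac{(k+1)(k-\cG)}2(\tau-\id)(x_1\ot z_k)$, the term $(\id-\tau)(\beta\ot\id)(\id\ot\tau)(\delta\ot\id)(\id-\tau)(x_2\ot z_k)$ evaluates to $\tfrac{k(k-1-\cG)}2(\tau-\id)(x_1\ot z_k)$, and $(\tau-\id)\crmh_{\Lone,\Lone}(x_2\ot z_k)=(k-\tfrac{\cG}2)(\tau-\id)(x_1\ot z_k)$, so the axiom becomes the numerical identity
\[
\tfrac{k(k-1-\cG)}2+k-\tfrac{\cG}2=\tfrac{(k+1)(k-\cG)}2 ,
\]
which holds, and which is exactly the constraint pinning down both the coefficient $-\tfrac{\cG}2$ in $t\cdot x_2$ and the structure constant $\tfrac{k(k-1-\cG)}2$ of $\delta$. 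I expect the main obstacle to be precisely the evaluation of that middle term: one must push $x_2\ot z_k$ through the five factors while tracking the $\chi(g,h)$-scalars produced by each $\tau$ and checking they collapse so that only the stated multiple of $(\tau-\id)(x_1\ot z_k)$ survives; the rest is bookkeeping of the kind already carried out for the super Jordan plane.
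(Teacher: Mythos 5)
Your proposal is correct and takes essentially the same route as the paper's proof: establish the crossed module structure (via Lemma~\ref{lem:coabcoaction} and the collapse of \eqref{eq:lcrm} for abelian coabelian $\ff$), check that $\beta,\delta$ are morphisms and give a Lie algebra/coalgebra in $\lcrmC{\ff}$, compute $\crmh$, and then verify \eqref{eq:bialgebra_compatibility} by the $-\tau$-symmetry reduction, with everything vanishing except the pair $x_2\ot z_k$. The value you assert for the middle term there, $\tfrac{k(k-1-\cG)}2(\tau-\id)(x_1\ot z_k)$, is exactly what the paper's computation produces, so the resulting numerical identity $\tfrac{k(k-1-\cG)}2+k-\tfrac\cG2=\tfrac{(k+1)(k-\cG)}2$ is the same final check as in the paper.
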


It is illuminating to put the Lie bialgebras $\Lone $ next to the description of the Laistrygonian Nichols algebras in 
\cite[Prop.\,4.3.5]{MR4298502}.
We call the curved Lie bialgebras $\Lone $ Laistrygonian Lie bialgebras.

\begin{proof}
By Lemma~\ref{lem:coabcoaction},
$\lambda $ defines a Lie coaction
of $\ff $ on $\Lone $. It is easy
to check that the action of $s$
and of $t$ on $\Lone $ commute,
and hence $\alpha $ defines
a Lie module structure of $\ff$
on $\Lone $.
The crossed module axiom \eqref{eq:lcrm}
applied to $f\ot x$ with $f\in \ff$ and $x\in \Loone $
is satisfied trivially, since all terms are zero. The axiom for $f\ot x_2$ with $f\in \{s,t\}$ is easily checked.
Thus $\Lone \in \lcrmC{\ff}$ and $\Loone \in \lcrmC{\ff}$.

  Clearly, $\beta $ is a morphism in $\cC$ and $\beta \tau =-\beta $. One checks quickly that $\beta $ is a morphism in $\lcrmC{\ff}$. A straightforward calculation shows that $\beta $ satisfies the Jacobi identity. For example, for all $x\in \Loone $ one obtains that
\begin{align*}
    \beta(\id \ot \beta)&(\id +\tau_{23}\tau_{12}+\tau_{12} \tau_{23})(x_2\ot x_2\ot x)\\
    &=\beta (x_2\ot \beta (\id +\tau )(x_2\ot x))
    +\beta (\id \ot \beta )\tau_{12}\tau_{23}(x_2\ot x_2\ot x)=0
\end{align*}
by the antisymmetry of $\beta $
and since $\beta (x_2\ot x_2)=0$. Hence $(\Lone ,\beta )$ is a Lie algebra in $\lcrmC{\ff}$.

It is fairly obvious that $\delta $ is a morphism in $\lcrmC{\ff}$.
By definition, $\tau \delta=-\tau $.
The co-Jacobi identity clearly holds when evaluated at $x_1$ or $x_2$.
On $z_k$ with $0\le k\le \cG$ it follows by using the definition of $\delta$ and the equation $(\tau -\id)(x_1\ot x_1)=0$. Hence $(\Lone,\delta)$ is a Lie coalgebra in $\lcrmC{\ff}$.

For the infinitesimal braiding we obtain from
Lemma~\ref{le:zeta} that
\begin{align}
    \crmh(x\ot y)&=0\quad
    \text{for all $x,y\in \Loone $,}\\
    \crmh (x_2\ot x_2)&=x_1\ot x_2+x_2\ot x_1,\\
    \crmh (x_2\ot z_k)&=(k-\frac\cG 2)x_1\ot z_k
\end{align}
for all $0\le k\le \cG$.
In particular, $\crmh $ is non-zero.

Finally, we verify the Lie bialgebra axiom \eqref{eq:bialgebra_compatibility} on the given basis vectors of $\Lone$.
To do so, we introduce first the total order
$$ x_1 < x_2 < z_0 < z_1 <
\cdots < z_\cG $$
on our basis. Since both sides of
\eqref{eq:bialgebra_compatibility}
are invariant under multiplication with $-\tau $ from the right, it suffices to check the Lie bialgebra axiom for tensors $x\ot y$ of basis vectors $x,y$ with $x\le y$. 

First, the Lie bialgebra axiom has been already verified on $\Bbbk x_1+\Bbbk x_2$ when studying the Lie bialgebra of the Jordan plane.
Moreover,
\[ \delta(\Loone)\subseteq \Loone \ot \Loone ,\]
and
$\beta (x\ot y)=0$ and $\crmh (x\ot y)=0$ for all $x,y\in \Loone $. Thus
$\delta \beta(x\ot y)=\delta(0)=0$,
and
\begin{align*}
    &(\id -\tau )(\beta \ot \id)(\id \ot \tau)(\delta \ot \id )(\id -\tau )(x\ot y)\\
    &\quad
    \in (\id -\tau )(\beta \ot \id)(\Loone \ot \Loone \ot \Loone )=0
\end{align*}
for all $x,y\in \Loone $.

Lastly, for all $0\le k\le \cG$
(with $z_{-1}=z_{\cG+1}=0$) we obtain that
\begin{align*}
  \delta \beta (x_2\ot z_k)
  &=\delta (z_{k+1})
  =\frac{(k+1)(k-\cG)}2(\tau -\id)
  (x_1\ot z_k),\\
  (\tau -\id)\crmh (x_2\ot z_k)
  &=\big(k-\frac \cG 2\big)(\tau -\id )(x_1\ot z_k),
\end{align*}
and
\begin{align*}
    (\id -\tau )& (\beta \ot \id)
    (\id \ot \tau)(\delta \ot \id )
    (\id -\tau )(x_2\ot z_k)\\
    &=
    (\id -\tau )(\beta \tau \ot \id)
    (\id \ot \delta )
    (-x_2\ot z_k)\\
    &=
    \frac{k(k-1-\cG)}2(\id -\tau ) (\beta \ot \id)
    (\tau_{23}-\id )
    (x_2\ot x_1\ot z_{k-1})\\
&=
    \frac{k(k-1-\cG)}2(\id -\tau ) \tau (\id \ot \beta )
    (\tau \ot \id )
    (x_2\ot x_1\ot z_{k-1})\\
&=
    \frac{k(k-1-\cG)}2(\tau -\id ) (x_1\ot z_k).
\end{align*}
These equations imply directly that \eqref{eq:bialgebra_compatibility}
is fulfilled, completing the proof of Proposition~\ref{pr:Lone}.
\end{proof}

\subsection*{Acknowledgements}

This work was partially supported by 
the project OZR3762 of Vrije Universiteit Brussel. We thank
Pavel Etingof for pointing out the work \cite{MR1331627} 
of Cartier on infinitesimal braidings. We also thank
the referee for comments and suggestions.

\bibliographystyle{abbrv}
\bibliography{refs}

\end{document}